\newtheorem{theo}{Theorem}[section]
\newtheorem{defi}[theo]{Definition}
\newtheorem{prop}[theo]{Proposition}
\newtheorem{assu}{Assumption}
\newtheorem{exam}{Example}
\newtheorem{lemm}[theo]{Lemma}
\newtheorem{rema}[theo]{Remark}
\def\R{\mathbb{R}}
\def \N{\mathbb{N}}
\def \Z{\mathbb{Z}}
\def\1{\mathbf{1}}
\def \PP{\mathbf{P}} 
\def\EE{\mathbf {E}} 
\def \E{\mathbb{E}} 
\def \Es{\mathbb{E}^\star}
\def \Et{\mathbb{E}^\theta}
\def \Pt{\mathbb{P}^\theta}
\def \P{\mathbb{P}} 
\def \Ps{\mathbb{P}^\star}
\newcommand{\ts}{{\theta^{\star}}}
\newcommand{\htet}{\widehat \theta_n}
\newcommand{\PPt}{\mathbf{P}^{\theta}}
\newcommand{\PPs}{\mathbf{P}^{\star}}
\newcommand{\EEs}{\mathbf{E}^{\star}}
\def\x{\mathbf {x}} 
\def\h{\mathbf{h}} 
\def \eps{\varepsilon}
\def\w{\omega}
\def\t{\theta}
\def\to{\rightarrow}
\newcommand{\argmax}{\mathop{\rm Argmax}}
\definecolor{pink}{rgb}{1,0.3,0.8}
\def\dd{\mathrm{d}}
\def\ee{\mathrm{e}}
\def\to{\rightarrow}
\def\Beta{\mathrm{B}}
\begin{document}

\DeclareGraphicsExtensions{.pdf, .jpg, .jpeg, .png, .ps}

\title{
Maximum likelihood estimator consistency for ballistic random walk in a parametric random environment.}

\author{Francis {\sc Comets}\footnote{
Laboratoire Probabilit\'es et Mod\`eles Al\'eatoires, Universit\'e
  Paris     Diderot,      UMR     CNRS     7599,      E-mail:     {\tt
    comets@math.univ-paris-diderot.fr};
$^{\dag}$ Laboratoire Statistique et G\'enome, Universit\'e d'\'Evry Val
d'Essonne, UMR CNRS 8071, USC INRA,     E-mail:                          {\tt
  $\{$mikael.falconnet, catherine.matias$\}$@genopole.cnrs.fr}; 
$^\ddag$ D\'epartement Informatique, IUT de Fontainebleau,
E-mail: {\tt oleg.loukianov@u-pec.fr};
$^{\S}$ Laboratoire Analyse et
Probabilit\'es, Universit\'e d'\'Evry Val d'Essonne, 
E-mail: {\tt dasha.loukianova@univ-evry.fr}
}
 \and 
 Mikael {\sc Falconnet}$^{\dag}$ 
 \and 
Oleg {\sc Loukianov}$^{\ddag}$ 
\and 
Dasha {\sc Loukianova}$^{\S}$ 
\and 
Catherine {\sc Matias}$^{\dag}$
}

\maketitle

\begin {abstract}
We consider  a one  dimensional ballistic random  walk evolving  in an
i.i.d. parametric random environment.  We provide a maximum likelihood
estimation procedure  of the parameters based on  a single observation
of the  path till the time it  reaches a distant site,  and prove that
the estimator is consistent as  the distant site tends to infinity. We
also explore the numerical performances of our estimation procedure. 
\end{abstract}

{\it  Key words}  : Ballistic  regime, maximum  likelihood estimation,
random walk in random environment.
{\it MSC 2000} : Primary 62M05, 62F12; secondary 60J25.

\section{Introduction}

Random walks in random environments (RWRE) have attracted much attention
lately, mostly in the physics and probability theory literature.  
These processes were  introduced originally by~\cite{Chernov} to model
the replication of a DNA sequence. The idea underlying Chernov's model
is that the protein that 
moves along the  DNA strand during replication performs  a random walk
whose transition 
probabilities depend on the sequence letters, thus modeled as a random
environment. Since then, RWRE have been developed far beyond this
original  motivation, resulting  into a  wealth of  fine probabilistic
results.  Some recent surveys on the subject 
include~\cite{Hughes} and~\cite{ZeitouniSF}.

 Recently,  these models have regained interest  from biophysics, as
 they fit the description of  some physical experiments that unzip the
 double strand of a DNA  molecule.  More precisely, some fifteen years
 ago,  the first  experiments on  unzipping a  DNA sequence  have been
 conducted, relying on several different techniques \citep[see][and
 the  references  therein]{Balda_06,Balda_07}.   By that  time,  these
 experiments 
 primarily took place in  the quest for alternative (cheaper,faster or
 both) sequencing 
 methods.  When conducted in the presence of bounding proteins, such 
 experiments also enabled the identification of specific locations at which
 proteins and enzymes bind to the DNA~\citep{Koch_02}.  
Nowadays, similar  experiments  are conducted  in order  to
 investigate  molecular  free  energy  landscapes  with  unprecedented
 accuracy \citep{Alemany,Huguet_Forns_Ritort}.  Among other
 biophysical applications, one  can mention the study  of the formation  of DNA or
 RNA hairpins \citep{Bizarro}.

Despite the  emergence of data that  is naturally modeled  by RWRE, it
appears that very few statistical issues on those processes 
have been studied so far. Very recently, \cite{andreo} considered a 
problem      inspired      by      an      experiment      on      DNA
unzipping \citep{Balda_06,Balda_07,Cocco_Monasson}, where the aim is to predict
the sequence of bases relying on the observation of several unzipping 
of  one finite  length DNA  sequence. Up  to some  approximations, the
problem  boils   down  to  considering   independent  and  identically
distributed (i.i.d.) replicates of a one dimensional nearest neighbour 
path (\emph{i.e.}  the walk has $\pm 1$ increments) in the same 
finite and two-sites  dependent environment, up to the  time each path
reaches some value $M$ (the sequence 
length).  In this setup, the authors consider both a discrete time and a
continuous time model.  They  provide  estimates  of  the  values  of  the
environment at each site, which corresponds to estimating the sequence
letters of  the DNA molecule. Moreover, they  obtain explicit formula
for the probability to be wrong for a given estimator, thus evaluating
the quality of the prediction.

In the present work, we study a different problem, also 
motivated by  some DNA unzipping experiments: relying  on an arbitrary
long trajectory of 
a transient  one-dimensional nearest neighbour path, we  would like to
estimate the parameters of the 
environment's distribution.  Our  motivation comes more precisely from
the most recent experiments, that aim at characterising free binding energies
between base pairs relying on the unzipping of a synthetic DNA
sequence \citep{Ribezzi}.  In this setup, the environment is still
considered as random as those free energies are unknown and need to be
estimated. While our asymptotic  setup is still far from corresponding
to the reality of those experiments, our work might give some insights on
statistical properties of estimates of those binding free energies. 


The parametric estimation of the environment distribution 
has already  been studied in~\cite{AdEn}.  In their  work, the authors
consider  a  very  general  RWRE  and provide  equations  relying  the
distribution of some statistics of 
the trajectory to some moments of the environment distribution. 
In the specific case of a one-dimensional nearest neighbour path, 
those equations give moment estimators for the environment distribution parameters.  
It is worth
mentioning that due to its great generality, the method is hard to
understand at  first, but it takes  a simpler form  when one considers
the specific case of a one-dimensional nearest-neighbour path. 
Now, the method has two main drawbacks: first, it is not generic in the
sense that it has to be designed differently for each parametric setup
that  is considered.  Namely, the  method relies  on the  choice  of a
one-to-one mapping between the  parameters and some moments. Note that
the  injectivity  of  such a  mapping  might  even  not be  simple  to
establish      (see      for       instance      the      case      of
Example~\ref{ex:deuxpts_3param}    below,    further   developed    in
Section~\ref{sect:EstiAdEn}). 
Second,  from a  statistical  point of  view,  it is  clear that  some
mappings  will give  better  results than  others.  Thus the  specific
choice of a mapping has an impact on the estimator's performances.

As an alternative, we propose here to consider maximum likelihood
estimation  of  the parameters  of  the  environment distribution.  We
consider a  transient nearest neighbour path in  a random environment,
for which we are able to define some criterion - that we call a
log-likelihood  of the observed  process, see~\eqref{eq:l}  below. Our
estimator is then defined as the maximiser of this criterion - thus a
maximum likelihood  estimator.  When properly  normalised, we prove
that this criterion is convergent as the size of the path increases
to infinity.  This part of our  work relies on using  the link between
RWRE and branching processes in random environments (BPRE). While this
link is already  well-known in the literature, we  provide an explicit
characterisation  of  the  limiting  distribution  of  the  BPRE  that
corresponds to our RWRE (see theorem~\ref{theo:limitlaw} below). 
Relying on this precise characterisation, we then further
prove that the limit of our
normalised  criterion is  finite in  what is  called  the ballistic
region, namely the  set of parameters such that the  path has a linear
increase (see  Section~\ref{sect:RWRE} below for  more details). Then,
following standard  statistical results, we are able  to establish the
consistency of our estimator.   We also provide  synthetic experiments to
compare    the    effective     performances    of    our    estimator
and~\citeauthor{AdEn}'s       procedure.       In      the       cases
where~\citeauthor{AdEn}'s estimator is easily settled, while the two methods
exhibit  the  same  performances  with  respect  to  their  bias,  our
estimator exhibits a much smaller variance. 
We mention that establishing asymptotic normality of
this estimator  requires much  more technicalities and  is out  of the
scope of the  present work. This point will be  studied in a companion
article, together with variance estimates and confidence intervals. 

The   article  is   organised   as  follows.   Section~\ref{sect:RWRE}
introduces our  setup: the one dimensional nearest  neighbour path, and
recalls some well-known results about the behaviour of those 
processes.  Then   in  Section~\ref{sect:M_Estimator},  we   present  the
construction  of  our  M-estimator   (\emph{i.e.}  an  estimator
maximising some criterion function), state the assumptions required on the
model as well as our consistency result (Section~\ref{sect:consistency_statement}). Section~\ref{sect:ex} presents some
examples of  environment distributions for which  the model assumptions
are satisfied so that our  estimator is consistent.  Now, the proof of
our        consistency       result       is        presented       in
Section~\ref{sect:consistency}.  The section  starts by  recalling the
link between  RWRE and  BPRE (Section~\ref{sect:BP}).  Then,  we state
our  core  result:  the  explicit  characterisation  of  the  limiting
distribution of the branching process that is linked with our path; and
its corollary: the existence of a (possibly infinite) limit for the normalised
criterion (Section~\ref{sect:LLN}). 
In Section~\ref{sect:UnifConv} we first provide a technical result on the
uniformity of  this convergence, then establish that  in the ballistic
case, the limit of the normalised criterion is finite. An almost converse
statement is also given  (Lemma~\ref{lemm:ue}). To conclude this part,
we  prove  in  Section~\ref{sect:Ident}  that the  limiting  criterion
identifies the  true parameter value (under  a natural identifiability
assumption  on the model  parameter).  Finally,  numerical experiments
are  presented  in Section~\ref{sect:experiments},  focusing  on the  three
examples that were developed in Section~\ref{sect:ex}.  Note that we also provide an explicit
description  of  the  form  of~\citeauthor{AdEn}'s  estimator  in  the
particular  case  of the  one-dimensional  nearest  neighbour path  in
Section~\ref{sect:EstiAdEn}.


\section{Definitions, assumptions and results}


\subsection{Random walk in random environment} \label{sect:RWRE}
Let  $\w=\{\w_x\}_{x\in\Z}$  be  an  independent  and  identically
distributed  (i.i.d.) collection  of  $(0,1)$-valued random  variables
with  distribution  $\nu$.  The  process~$\w$  represents a  random
environment in which the random walk will evolve.  We suppose that the
law    $\nu=\nu_{\theta}$     depends    on    some  unknown  parameter
$\theta\in\Theta,$ where $\Theta \subset \R^d$ is assumed to be a compact set.
Denote  by  $\P^{\theta}=\nu_{\theta}^{\otimes   \Z}$  the  law  on
$(0,1)^{   \Z}$  of   the   environment  $\{\w_x\}_{x\in\Z}$   and by 
$\E^{\theta}$ the expectation under this law.

For fixed  environment $\w$, let $X=\{X_t\}_{t\in\N}$  be the Markov
chain on $\Z$ starting at $X_0=0$ and with transition probabilities 
\[
  P_{\w}(X_{t+1}=y|X_t=x)=\left \{\begin{array}{lr}
\w_x&\mbox{if}\ y=x+1,\\
1-\w_x&\mbox{if}\ y=x-1,\\
0&\mbox{otherwise}.
\end{array}\right.
\]
The symbol $P_{\w}$ denotes the measure on the path space of $X$ given
$\w$, usually called \emph{quenched} law. The (unconditional) law of $X$ is
given by 
\[
  \PP^{\theta}(\cdot)=\int P_{\omega}(\cdot)\dd\P^{\theta}(\omega),
\]
this is the so-called
\emph{annealed}  law.   We  write  $E_{\w}$  and   $\EE^{\theta}$  for  the
corresponding quenched and annealed expectations, respectively. We start to  recall some 
well-known asymptotic results. 
Introduce a family of i.i.d. random variables,
\begin{equation}
  \label{eq:rho}
  \rho_x=\frac{1-\w_x}{\w_x}, \qquad x \in {\mathbb Z},
\end{equation}
and assume that $\log \rho_0$ is integrable.
\cite{Sol} proved the following 
classification:
\begin{itemize}
\item[(a)] 
if $\E^{\theta}(\log\rho_0)<0$, then 
\[
  \displaystyle{  \lim_{t   \to  \infty}   X_t  =  +   \infty},  \quad
  \PP^{\theta} \mbox{-almost surely}. 
\] 
\item[(b)] 
If $\E^{\theta}(\log\rho_0)=0$, then 
\[
  \displaystyle{   - \infty = \liminf_{t \to \infty} X_t < \limsup_{t \to \infty} X_t =
  +\infty}, \quad  \PP^{\theta} \mbox{-almost surely}.
\]
\end{itemize}
The case of $ \E^{\theta}(\log\rho_0)>0 $ follows from (a) by changing
the sign of $X$. Note that the walk $X$ is 
$\PP^{\theta}$-almost surely   transient in case (a) and recurrent in case (b). 

 In  the present  paper,  we restrict  to  the case  (a)  when $X$  is
 transient to the right. 
Then,  it was also found that
the rate of its increase (with respect to time $t$) is either linear or slower
than linear. The first case  is called \emph{ballistic} case and the
second one \emph{sub-ballistic}  case. More precisely, letting $T_n$
be the first hitting time of the positive integer $n$, 
\begin{equation} \label{equa:HittingTime}
T_n = \inf \{ t \in \N \, : \, X_t = n \},
\end{equation}
and assuming $\E^{\theta}(\log\rho_0)<0$ all through, we have
\begin{itemize}
\item[(a1)] if  $\E^{\theta}(\rho_0) < 1$,   then,  $  \PP^{\theta} \mbox{-almost surely},$
  \begin{equation}
\label{eq:lln1}
\frac{T_n}{n}           \xrightarrow[n          \to          \infty]{}
\frac{1+\E^{\theta}(\rho_0)}{1-\E^{\theta}(\rho_0)} 
,
\end{equation}
\item[(a2)]  If   $\E^{\theta}(\rho_0)  \geq 1$,  then $T_n   /   n \to + \infty$  
$ \PP^{\theta} \mbox{-almost surely}$, when $n$ tends to infinity.
  \end{itemize}
 



\subsection{Construction  of a M-estimator} \label{sect:M_Estimator}
  
We  address the  following statistical  problem: estimate  the unknown
parameter $\theta$ 
from a single observation of the  RWRE path till the time it reaches a
distant site.  Assuming 
transience to  the right, we  then observe $X_{[0,T_n]}=\{X_t \,  : \,
t=0,1,\ldots, T_n\}$, for some $n \geq 1$.

If  $\x_{[0,t]}:=(x_0,\dots,x_t)$ is a nearest neighbour path of length $t$, we define
 for all $x\in\Z,$ 
 \begin{align}
&L(x,\x_{[0,t]}):=\sum_{s=0}^{t-1}\1\{x_s=x;\
x_{s+1}=x-1\}, \label{equa:LeftStep} \\
\mbox{and} \quad 
&R(x,\x_{[0,t]}):=\sum_{s=0}^{t-1}\1\{x_s=x;\ x_{s+1}=x+1\}, \label{equa:RightStep}
\end{align}
the number  of left  steps (resp.  right steps)    from site~$x$.  (Here,  $\1\{\cdot\}$ denotes  the
indicator function). We let also $v_t$ (resp. $V_{T_n}$) be the set of
integers visited by the path $\x_{[0,t]}$ (resp. $X_{[0,T_n]}$).
Consider now  a nearest neighbour path $\x_{[0,t_n]}$  starting from 0
and first hitting 
site $n$ at time $t_n$.  It is straightforward to compute its quenched
and annealed probabilities, respectively
\[ 
  P_{\w}(X_{[0,T_n]}=\x_{[0,t_n]})=\prod_{x\in
  v_{t_n}}\w^{R(x,\x_{[0,t_n]})}_x (1-\w_x)_{{\phantom{x}}}^{L(x,\x_{[0,t_n]})}
\]
and
\[
\PP^{\theta}(X_{[0,t_n]}=\x_{[0,t_n]}) = \prod_{x\in
  v_{t_n}}\int_0^1a^{R(x,\x_{[0,t_n]}) }(1-a)^{L(x,\x_{[0,t_n]})}\dd \nu_{\theta}(a).
\]
Under the following assumption, these weights add up to 1 over all possible choices 
of $\x_{[0,t_n]}$.
\begin{assu}(Transience to the right). \label{assu:trans}
 For any $ \theta\in\Theta$,
$  \E^{\theta}|\log\rho_0|<\infty$ and
\[
  \E^{\theta}(\log\rho_0)<0.
\]
\end{assu}

Introducing the short-hand notation
\[
  L_x^n := L(x,X_{[0,T_n]}) \quad \mbox{and} \quad R_x^n := R(x,X_{[0,T_n]}),
\]
we can express the  (annealed) log-likelihood of the observations as
\begin{equation} \label{eq:vrsbl2}
\tilde \ell_n(\theta) =
\sum_{x=0}^{n-1}\log\int_0^1a^{R_{x}^n}(1-a)^{L_x^n}\dd\nu_{\theta}(a)  
 +      \sum_{x<0,x\in      V_{T_n}}\log
 \int_0^1a^{R_x^n}(1-a)^{L_x^n}\dd\nu_{\theta}(a). 
\end{equation}
Note that as the random walk $X$ starts from
$0$  (namely $X_0=0$)  and is  observed until  the first  hitting time
$T_n$ of $n \geq 1$, 
we  have $R_x^n=L_{x+1}^n+1$ for  
$x=1,2,\ldots,n-1$. We will  perform this change in the  first line of
the right-hand side of 
(\ref{eq:vrsbl2}).
Also, since   the   walk  is transient   to   the   right
(Assumption~\ref{assu:trans}),  the second sum  in the  right-hand side
(accounting for negative sites  $x$) is almost surely bounded.  Hence,
this sum will not influence in a significant way the behaviour of the
normalised log-likelihood, and we will  drop it. Therefore, we are led
to the following choice. 
\begin{defi}
Let   $\phi_{\theta}$  be the  function  from $\N^2$ to $\R$ given by 
\begin{equation} \label{eq:PhiTheta}
  \phi_{\theta}(x,y)=\log\int_0^1a^{x+1}(1-a)^{y}\dd\nu_{\theta}(a).
\end{equation}
The criterion function $\t \mapsto\ell_n(\t)$ is defined as 
\begin{equation}\label{eq:l}
\ell_n(\theta)=\sum_{x=0}^{n-1}
 \phi_{\theta}(L_{x+1}^n,L_{x}^n),
\end{equation}
that is the first sum (dominant term) in (\ref{eq:vrsbl2}). 
\end{defi}
We  maximise this  criterion function  to obtain  an estimator  of the
unknown parameter. 
To prove
convergence of the estimator,  some assumptions
are further required.
\begin{assu}(Ballistic case). \label{assu:ballistic}
For any $ \theta\in\Theta, \   \E^{\t}(\rho_0)<1$.
 \end{assu}
As already mentioned, Assumption~\ref{assu:trans} is equivalent to the
transience of the 
walk to the right, and together with
Assumption~\ref{assu:ballistic}, it implies positive speed.
\begin{assu}(Continuity). \label{assu:cont}
For  any $x,y\in  \N$, the  map $\t  \mapsto
\phi_{\theta}(x,y)$ is continuous on $\Theta$.
\end{assu}
Assumption~\ref{assu:cont}  is  equivalent  to  the  map  $\t  \mapsto
\nu_\t$ being continuous on $\Theta$ with respect to the weak topology.

 \begin{assu}(Identifiability).\label{assu:ident}
 $\forall (\theta,\theta')\in\Theta^2,$ 
 $\nu_{\theta}\neq\nu_{\theta'}\iff \theta\neq\theta'.$
 \end {assu}
 
\begin{assu} \label{assu:BoundedBelow}
The collection  of probability  measures $\{ \nu_\t  \, : \,  \t \in \Theta \}$ is such that
\[
   \inf_{\theta \in \Theta} \E^\t [\log (1- \w_0)] > -\infty.
\]
\end{assu}
Note that under Assumption~\ref{assu:ballistic} we have $\E^\t [\log \w_0] > -\log 2$ for any $\t \in \Theta$. Assumptions~\ref{assu:cont} and~\ref{assu:BoundedBelow} are  technical and involved in  the proof of the consistency of our estimator. Assumption~\ref{assu:ident} states identifiability of the  parameter~$\theta$ with
respect  to  the   environment  distribution  $\nu_\theta$  and  is
necessary for estimation. 

According  to Assumption~\ref{assu:cont},  the  function $\theta\mapsto
\ell_n(\theta)$   is   continuous    on   the   compact   parameter   set
$\Theta$.  Thus,   it achieves its maximum, and we define the estimator
$\htet$ as a maximiser. 
\begin{defi}
An estimator $ \htet$ of $\theta$ is defined as a measurable choice
\begin{equation}\label{eq:estimator}
\htet \in \argmax_{\theta\in\Theta}\ell_n(\theta).
\end{equation}
\end{defi}
Note that $\htet$ is not necessarily unique. 
\begin{rema}
  The estimator $\htet$  is a  $M$-estimator,  that  is,
 the maximiser of some criterion function of the observations.  
 The criterion $\ell_n$ is not exactly the log-likelihood for we neglected the contribution of the 
 negative sites. However, with  some abuse  of notation,  we call  $\htet$ a
  maximum likelihood estimator. 
\end{rema}


\subsection{Asymptotic consistency of the estimator in
  the ballistic case}\label{sect:consistency_statement}
From now  on, we assume  that the process  $X$ is generated  under the
true parameter value  $\ts$, an interior point of  the parameter space
$\Theta$, that we want to estimate. We shorten to 
$\PPs$   and   $\EEs$   (resp.   $\Ps$   and   $\Es$)   the   annealed
(resp. quenched) probability $\PP^{\ts}$ (resp. $\P^{\ts}$) and
corresponding expectation~$\EE^{\ts}$ (resp. $\E^\ts$) under
parameter value $\ts$.

\begin{theo}(Consistency). \label{theo:consistency}
Under Assumptions~\ref{assu:trans} to~\ref{assu:BoundedBelow}, 
for any  choice of $\widehat\theta_n$  satisfying~\eqref{eq:estimator}, we
have
\[
  \lim_{n\to\infty}\widehat\theta_n=\ts,
\]
in $\PPs$-probability.
\end{theo}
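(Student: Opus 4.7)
The plan is to follow the standard M-estimator route on a compact parameter space: establish uniform convergence in $\theta\in\Theta$ of $n^{-1}\ell_n(\theta)$ to a deterministic limit $\ell(\theta)$, verify that $\ell$ has a unique maximum at $\ts$, and invoke the classical argmax theorem to obtain $\htet\to\ts$ in $\PPs$-probability. Compactness of $\Theta$ together with continuity of $\theta\mapsto\phi_\theta(x,y)$ (Assumption~\ref{assu:cont}) reduce uniform convergence to pointwise convergence plus a domination argument.

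For pointwise convergence I would exploit the classical correspondence between a right-transient one-dimensional nearest-neighbour RWRE and a branching process in random environment (BPRE) with immigration: reading the left-step counts $L_n^n, L_{n-1}^n, \dots, L_0^n$ backwards from site $n$ yields a BPRE whose offspring law at site $x$ is Geometric$(\w_x)$. The explicit description of the limit distribution of this BPRE (Theorem~\ref{theo:limitlaw}, the structural core of the paper) together with ergodicity of the i.i.d. environment gives
\[
\frac{1}{n}\ell_n(\theta)
= \frac{1}{n}\sum_{x=0}^{n-1}\phi_\theta(L_{x+1}^n,L_x^n)
\xrightarrow[n\to\infty]{} \ell(\theta):=\Es\bigl[\phi_\theta(Z_1,Z_0)\bigr],
\]
where $(Z_0,Z_1)$ is distributed under $\ts$ according to the stationary law of the BPRE; this is the content of Section~\ref{sect:LLN}.

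Finiteness of $\ell(\theta)$ rests on Assumptions~\ref{assu:ballistic} and~\ref{assu:BoundedBelow}. Since $\phi_\theta\leq 0$ but possibly unbounded below, Jensen's inequality inside the integral defining $\phi_\theta$ yields the crude lower bound $\phi_\theta(x,y)\geq (x+1)\,\E^\theta[\log\w_0]+y\,\E^\theta[\log(1-\w_0)]$. Assumption~\ref{assu:ballistic} gives $\Es(\rho_0)<1$, hence $\Es[Z_0]<\infty$ via~\eqref{eq:lln1}, and combined with the uniform lower bound of Assumption~\ref{assu:BoundedBelow} on $\E^\theta[\log(1-\w_0)]$ and the remark $\E^\theta[\log\w_0]>-\log 2$ that follows Assumption~\ref{assu:BoundedBelow}, this provides an integrable dominating function uniformly in $\theta$. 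A standard equicontinuity argument on the compact $\Theta$ then upgrades pointwise to uniform convergence, in the spirit of Section~\ref{sect:UnifConv} and Lemma~\ref{lemm:ue}.

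Finally, for identification (Section~\ref{sect:Ident}) I would write
\[
\ell(\theta)-\ell(\ts)
= \Es\!\left[\log\frac{\int_0^1 a^{Z_1+1}(1-a)^{Z_0}\,\dd\nu_\theta(a)}{\int_0^1 a^{Z_1+1}(1-a)^{Z_0}\,\dd\nu_{\ts}(a)}\right],
\]
so that Jensen applied to the concave $\log$ yields $\ell(\theta)\leq\ell(\ts)$, with equality forcing the two integrals to agree $(Z_0,Z_1)$-almost surely; since the monomials $a^{x+1}(1-a)^y$ separate probability measures on $[0,1]$, this gives $\nu_\theta=\nu_{\ts}$, and Assumption~\ref{assu:ident} then forces $\theta=\ts$. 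Uniform LLN plus this identification conclude via the classical argmax theorem. The step I expect to be the main obstacle is precisely the finiteness and uniform integrability of $\ell(\theta)$: one must control the tails of the stationary BPRE distribution tightly enough to integrate $\phi_\theta$ uniformly in $\theta$, and this is where the ballistic assumption is indispensable — in the sub-ballistic regime $\Es[Z_0]=\infty$ and the whole argument collapses.
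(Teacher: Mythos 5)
Your proposal follows essentially the same route as the paper: the BPRE representation plus the ergodic theorem for the stationary chain give the pointwise limit $\ell(\theta)=\EEs[\phi_\theta(\tilde Z_0,\tilde Z_1)]$; the Jensen lower bound $\phi_\theta(x,y)\geq (x+1)\E^\theta[\log\w_0]+y\,\E^\theta[\log(1-\w_0)]$ together with the finiteness of the first moment of the stationary law in the ballistic case (from Theorem~\ref{theo:limitlaw}, rather than from~\eqref{eq:lln1}) yields finiteness of $\ell$ and the integrable envelope $A(1+\tilde Z_0+\tilde Z_1)$; and Jensen plus moment-determinacy of measures on $(0,1)$ plus Assumption~\ref{assu:ident} give identification, exactly as in Sections~\ref{sect:LLN}--\ref{sect:Ident}. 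The only cosmetic deviations are that the paper uses Wald's local-supremum form of uniform convergence (Lemma~\ref{lemm:cv_du_sup}, an ergodic theorem applied to $\sup_{\t\in U}\phi_\t$) instead of a global equicontinuity argument, and your $\phi_\theta(Z_1,Z_0)$ should read $\phi_\theta(\tilde Z_0,\tilde Z_1)$; neither affects the validity of the argument.
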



\section{Examples} \label{sect:ex}

\subsection{Environment with finite and known support} \label{sect:ex1}

\begin{exam} \label{ex:deuxpoints}
Fix $a_1 < a_2 \in (0,1)$ and let
$\nu=p\delta_{a_1}+(1-p)\delta_{a_2}$, where  $\delta_a$ is the Dirac
mass located at $a$. Here, the unknown parameter is the proportion
$p\in  \Theta  \subset  [0,1]$  (namely~$\theta=p$). We  suppose  that
$a_1$, $a_2$  and $\Theta$ are  such that Assumptions~\ref{assu:trans}
and~\ref{assu:ballistic} are satisfied. 
\end{exam}

This example is easily generalised to $\nu$ having $m\ge 2$ support points namely
$  \nu=\sum_{i=1}^m p_i\delta_{a_i}$, 
where $a_1,\dots,a_m$ are distinct, fixed and known in $(0,1)$, we let
$p_m=1-\sum_{i=1}^{m-1}p_i$     and    the     parameter     is    now
$\theta=(p_1,\dots,p_{m-1})$. 

In the framework of Example~\ref{ex:deuxpoints}, we have
\begin{equation} \label{equa:Phi2pts}
 \phi_{p}(x,y)      =\log    [    p    a_1^{x+1}(1-a_1)^y    +(1-p)
 a_2^{x+1}(1-a_2)^y ] ,
\end{equation}
and
\begin{equation}\label{eq:ell_2points}
  \ell_n(p):=\ell_n(\theta) = \sum_{x=0}^{n-1}\log \Big[ p
  a_1^{L_{x+1}^n+1}(1 \!  -\! a_1)_{\phantom{1}}^{L_x^n} + (1\! -\! p) 
  a_2^{L_{x+1}^n+1}(1\! -\! a_2)_{\phantom{2}}^{L_x^n} \Big].
\end{equation}

Now, it is easily seen that Assumptions~\ref{assu:cont} to~\ref{assu:BoundedBelow} are satisfied.  
Coupling this point with the concavity of the
function $p\mapsto \ell_n(p)$ implies that 
$
 \widehat p_n = \argmax_{p\in\Theta}\ell_n(p)
$ 
is well-defined and unique (as $\Theta$ is a compact set). 
There is no  analytical expression for the
value  of  $\widehat  p_n$.  Nonetheless,  this estimator  may  be  easily
computed by numerical methods. Finally, it is consistent from Theorem~\ref{theo:consistency}.

\subsection{Environment with two unknown support points} \label{sect:ex2}

\begin{exam} \label{ex:deuxpts_3param}
We let $\nu=p\delta_{a_1}+(1-p)\delta_{a_2}$  and now the unknown
  parameter is  $\theta=(p,a_1,a_2) \in  \Theta$, where $\Theta$  is a
  compact subset of
\[
  (0,1)\times \{(a_1,a_2)\in (0,1)^2 \, : \,  a_1 < a_2\}.
\]
We  suppose that  $\Theta$ is  such  that Assumptions~\ref{assu:trans}
and~\ref{assu:ballistic} are satisfied. 
\end{exam}

This case is particularly interesting  as it corresponds to one of the
setups in the DNA unzipping experiments,
namely estimating binding energies with two types
of interactions: weak or strong.

The  function $\phi_\t$  and the  criterion  $\ell_n(\cdot)$ are
given        by~\eqref{equa:Phi2pts}        and~\eqref{eq:ell_2points},
respectively.  It  is  easily  seen  that  Assumptions~\ref{assu:cont}
to~\ref{assu:BoundedBelow} are  satisfied in  this setup, so  that the
estimator $ \widehat \t_n $ is well-defined. 
Once again, there is no  analytical expression for the
value  of  $\widehat  \t_n$.  Nonetheless,  this estimator  may  also be  easily
computed       by       numerical       methods.       Thanks       to
Theorem~\ref{theo:consistency}, it is consistent.

\subsection{Environment with Beta distribution} \label{sect:ex3}

\begin{exam} \label{ex:beta}
We let $\nu$ be  a Beta distribution with parameters $(\alpha,\beta)$,
namely
\[
\dd\nu(a) = \frac 1 {\Beta(\alpha,\beta)} a^{\alpha
  -1}(1-a)^{\beta -1} \dd a, \quad
   \Beta(\alpha,\beta)   =   \int_0^1   t^{\alpha
  -1}(1-t)^{\beta -1}\dd t.
\]
Here,  the  unknown parameter  is  $\theta=(\alpha,\beta) \in  \Theta$
where $\Theta$ is a compact subset of 
\[
  \{ (\alpha,\beta) \in (0,+ \infty)^2 \, : \, \alpha > \beta+1 \}.
\]
As $\E^{\t}(\rho_0)=\beta/(\alpha-1)$, the  constraint $\alpha > \beta
+1$ ensures that Assumptions~\ref{assu:trans} and~\ref{assu:ballistic}
are       satisfied.
\end{exam}

In the framework of Example~\ref{ex:beta}, we have
\begin{equation}
\phi_\t(x,y) = \log \frac{\Beta(x+1+\alpha,y+\beta)}{\Beta(\alpha,\beta)}
\end{equation}
and
\begin{align*} \label{eq:ell_beta}
  \ell_n(\theta) &= -n  \log \Beta(\alpha,\beta)+ \sum_{x=0}^{n-1} \log
  \Beta(L_{x+1}^n + \alpha+1,L_x^n + \beta)\\
&=   
 \sum_{x=0}^{n-1} \log \frac{(L_{x\!+\!1}^n \!+ \!\alpha)(L_{x\!+\!1}^n\! +\! \alpha\!-\!1)\ldots \alpha
 \times (L_{x}^n \!+\! \beta\!-\!1)(L_{x}^n \!+\! \beta\!-\!2)\ldots \beta}
 {(L_{x+1}^n \!+\!L_{x}^n \!+\! \alpha \!+\!\beta\!-\!1)(L_{x+1}^n\! +\!L_{x}^n \!+\! \alpha \!+\!\beta\!-\!2)\ldots
 (\alpha+ \beta)}.
 \end{align*}
In this case, it is easily seen that Assumptions~\ref{assu:cont}
to~\ref{assu:BoundedBelow} are satisfied, ensuring that $ \widehat \t_n $ 
is well-defined. Moreover, thanks to Theorem~\ref{theo:consistency}, it
is consistent.


\section{Consistency} \label{sect:consistency}

The proof of Theorem~\ref{theo:consistency} relies on classical theory
about the convergence of  maximum likelihood estimators, as stated for
instance in the classical approach by~\cite{Wald} for i.i.d.  random
variables. 
We  refer for  instance to  Theorem  5.14 in~\cite{VdV}  for a  simple
presentation of Wald's approach and further stress that the proof is valid on a compact
parameter space only. 
It relies on the two following ingredients.
\begin{theo} \label{thm:LimitEllTheta}
Under  Assumptions~\ref{assu:trans}  to~\ref{assu:BoundedBelow}, there
exists a finite deterministic limit $\ell(\t)$ such that 
\begin{equation*}
  \frac 1n \ell_n(\t) \xrightarrow[n \to \infty]{} \ell(\t) \quad
\mbox{in } \PPs \mbox{-probability},
\end{equation*}
and this  convergence is  "locally uniform" with  respect to $\t$.
\end{theo}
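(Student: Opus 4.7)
The plan is to exploit the well-known link between the RWRE and an associated branching process in random environment (BPRE), already invoked in Theorem~\ref{theo:limitlaw}. Reading the local times from site $n$ backwards, the sequence $Z_k := L_{n-k}^n$ ($k=0,\ldots,n$) is a BPRE whose reproduction laws are geometric with parameters $\w_{n-k}$. Reindexing the sum gives
\begin{equation*}
\ell_n(\theta) = \sum_{k=1}^{n} \phi_\theta(Z_{k-1}, Z_k),
\end{equation*}
so $n^{-1}\ell_n(\theta)$ is a Birkhoff-type average along the trajectory of the BPRE in the i.i.d.\ environment of law $\nu_\ts$. Theorem~\ref{theo:limitlaw} provides a stationary and ergodic version of this BPRE coupled with the environment; writing $\mu_\ts$ for the law of two successive generations under that stationary version, Birkhoff's ergodic theorem (plus a coupling to handle the finite-$n$ non-stationarity of the initial value $Z_0 = 0$) yields the in-probability convergence
\begin{equation*}
\frac{1}{n}\ell_n(\theta) \xrightarrow[n\to\infty]{} \ell(\theta) := \int \phi_\theta(u,v)\, \dd\mu_\ts(u,v),
\end{equation*}
as soon as the integral is well-defined.

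To check that $\ell(\theta)$ is finite, one combines the trivial upper bound $\phi_\theta \le 0$ with Jensen's inequality applied inside the logarithm:
\begin{equation*}
\phi_\theta(x,y) \;\ge\; (x+1)\,\E^\theta[\log \w_0] \,+\, y\,\E^\theta[\log(1-\w_0)].
\end{equation*}
Assumption~\ref{assu:ballistic} forces $\E^\theta[\log \w_0] > -\log 2$ uniformly on $\Theta$, and Assumption~\ref{assu:BoundedBelow} bounds $\E^\theta[\log(1-\w_0)]$ below uniformly in $\theta$. Hence $|\phi_\theta(x,y)| \le C(1+x+y)$ for some $\theta$-free constant $C>0$, and finiteness of $\ell(\theta)$ reduces to the moment bound $\int (u+v)\,\dd\mu_\ts < \infty$, a classical first-moment property of the limiting BPRE in the subcritical (ballistic) regime $\Es(\rho_0)<1$.

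For the locally uniform conclusion I would use a bracketing argument. Given $\theta_0 \in \Theta$ and an open neighborhood $U\ni\theta_0$, introduce the envelopes $\overline{\phi}_U := \sup_{\theta\in U}\phi_\theta$ and $\underline{\phi}_U := \inf_{\theta\in U}\phi_\theta$. Both inherit the same uniform lower bound as above (hence are $\mu_\ts$-integrable) while remaining non-positive, so that applying the ergodic theorem to each envelope and sandwiching
\begin{equation*}
\frac{1}{n}\sum_{k=1}^n \underline{\phi}_U(Z_{k-1},Z_k) \;\le\; \frac{1}{n}\ell_n(\theta) \;\le\; \frac{1}{n}\sum_{k=1}^n \overline{\phi}_U(Z_{k-1},Z_k), \qquad \theta\in U,
\end{equation*}
reduces the matter to showing that both $\int \overline{\phi}_U \,\dd\mu_\ts$ and $\int \underline{\phi}_U \,\dd\mu_\ts$ converge to $\ell(\theta_0)$ as $U$ shrinks to $\{\theta_0\}$; this follows from Assumption~\ref{assu:cont} combined with dominated convergence against the affine envelope.

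The step I expect to be the main obstacle is the ergodic-theorem application: the BPRE $(Z_k)_{k\le n}$ is \emph{not} stationary for finite $n$ since $Z_0 = L_n^n = 0$ is deterministic, so Birkhoff's theorem cannot be applied directly to the trajectory in hand. The explicit characterisation provided by Theorem~\ref{theo:limitlaw} is precisely what allows one to couple this chain with a stationary version of the BPRE, and simultaneously to verify the moment $\int (u+v)\,\dd\mu_\ts<\infty$ that underlies every integrability argument above.
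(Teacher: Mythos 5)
Your proposal is correct and follows essentially the same route as the paper: the BPRE representation of the left-step counts, the ergodic theorem for the resulting positive recurrent Markov chain (Theorem~\ref{theo:limitlaw} and Proposition~\ref{prop:lln}), the Jensen lower bound $\phi_\theta(x,y)\ge(x+1)\,\E^\theta[\log \w_0]+y\,\E^\theta[\log(1-\w_0)]$ combined with $\EEs(\tilde Z_0)<\infty$ in the ballistic regime for finiteness, and an envelope function for the local uniformity (Lemma~\ref{lemm:cv_du_sup}). The only cosmetic difference is that the coupling you anticipate for the non-stationary start $Z_0=0$ is unnecessary: the ergodic theorem for an irreducible positive recurrent Markov chain holds from an arbitrary initial distribution, which is exactly how the paper argues.
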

The sense of the local uniform convergence is specified in Lemma~\ref{lemm:cv_du_sup} in Subsection~\ref{sect:UnifConv}, and the value of $\ell(\t)$ is given in (\ref{eq:ell}).

\begin{prop} \label{prop:IdentificationEllTheta}
Under Assumptions~\ref{assu:trans} to~\ref{assu:BoundedBelow}, for any $\eps >0$,
\[
\sup_{\theta: \|\theta-\ts\| \geq \eps} \ell(\theta)<\ell(\ts).
\]
\end{prop}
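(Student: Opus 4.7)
\emph{Proof plan.} The strategy is to rewrite $\ell(\ts) - \ell(\theta)$ as an expected Kullback--Leibler divergence between two mixed negative binomial distributions, and then exploit the identifiability assumption to prove strict positivity. For $k\ge 1$ and $y\in\N$, let
\[
q_k^\theta(y) := \binom{k-1+y}{y}\int_0^1 a^k(1-a)^y \dd\nu_\theta(a) = \binom{k-1+y}{y}\exp\phi_\theta(k-1,y),
\]
which defines a probability measure on $\N$ (the law of a sum of $k$ i.i.d.\ geometric variables with parameter $\w\sim\nu_\theta$). The key observation, to be extracted from Section~\ref{sect:BP}, is that under $\PPs$, conditionally on $R_x^n=k$, the variable $L_x^n$ has distribution $q_k^{\ts}$; and since $R_x^n = L_{x+1}^n+1$, each summand in $\ell_n(\theta) - \ell_n(\ts)$ may be written as
\[
\phi_\theta(L_{x+1}^n,L_x^n)-\phi_{\ts}(L_{x+1}^n,L_x^n) = \log\frac{q_{L_{x+1}^n+1}^{\theta}(L_x^n)}{q_{L_{x+1}^n+1}^{\ts}(L_x^n)}.
\]

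Combining Theorem~\ref{thm:LimitEllTheta} with the ergodic theorem applied to the subcritical BPRE $(L^n_{n-k})_{k\ge 0}$ with its stationary law $\pi_\infty$ (given by Theorem~\ref{theo:limitlaw}), the averaged difference converges to
\[
\ell(\theta)-\ell(\ts) = -\sum_{k\ge 1}\pi_\infty(\{k-1\})\,\mathrm{KL}(q_k^{\ts}\,\|\,q_k^\theta) \le 0,
\]
by Gibbs' inequality, with equality iff $q_k^\theta = q_k^{\ts}$ for every $k$ in the support of $\pi_\infty$. Since $\Ps(L_{x-1}^n=0 \mid L_x^n=0) = \Es[\w_0]>0$, the state $0$ is accessible, so $\pi_\infty(\{0\})>0$; hence at least $q_1^\theta = q_1^{\ts}$, which reads
\[
\int_0^1 a(1-a)^y \dd\nu_\theta(a) = \int_0^1 a(1-a)^y \dd\nu_{\ts}(a) \qquad \forall y\in\N.
\]
The finite measures $a\,\dd\nu_\theta(a)$ and $a\,\dd\nu_{\ts}(a)$ on $[0,1]$ therefore agree on all polynomial moments in $(1-a)$, hence coincide by Stone--Weierstrass; since $a>0$ on $(0,1)$, this yields $\nu_\theta = \nu_{\ts}$, and then Assumption~\ref{assu:ident} forces $\theta = \ts$.

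The supremum version of the statement is then routine: Assumption~\ref{assu:cont} and the locally uniform convergence in Theorem~\ref{thm:LimitEllTheta} ensure that $\theta\mapsto\ell(\theta)$ is continuous on the compact set $\Theta$, so that on the closed subset $\{\theta:\|\theta-\ts\|\ge\eps\}$ the supremum is attained at some $\theta_0\ne\ts$, where $\ell(\theta_0)<\ell(\ts)$ by the preceding paragraph. The main technical hurdle in this plan is the rigorous passage to the ergodic-average limit in the formula for $\ell(\theta)-\ell(\ts)$: one must verify integrability of the log-ratios $\log(q_k^\theta/q_k^{\ts})$ against $\pi_\infty$, which is where Assumption~\ref{assu:BoundedBelow} is needed to control the negative tail of $\log(1-\w_0)$, and one must confirm that the a.s.\ limit identified by Theorem~\ref{thm:LimitEllTheta} is exactly the invariant-measure expectation (no mass escaping to infinity in the BPRE in the ballistic regime).
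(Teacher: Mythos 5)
Your proposal follows essentially the same route as the paper: the difference $\ell(\theta)-\ell(\ts)$ is written as a $\pi^\star$-weighted sum of Kullback--Leibler divergences between the transition kernels $Q_{\ts}(x,\cdot)$ and $Q_{\theta}(x,\cdot)$, Jensen/Gibbs gives $\le 0$ with equality forcing equality of moments of $\nu_\theta$ and $\nu_{\ts}$ (hence $\theta=\ts$ by bounded support and Assumption~\ref{assu:ident}), and the supremum statement follows from continuity of $\ell$ on the compact $\Theta$. The only cosmetic differences are that you extract the moment identity from the single state $x=0$ rather than from all states (both work, since $\pi^\star$ has full support), and that the paper obtains continuity of $\ell$ directly by dominated convergence from the bound $|\phi_\theta(\tilde Z_0,\tilde Z_1)|\le A(1+\tilde Z_0+\tilde Z_1)$ (via Assumptions~\ref{assu:ballistic} and~\ref{assu:BoundedBelow}) rather than through the locally uniform convergence of $\ell_n/n$.
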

Theorem~\ref{thm:LimitEllTheta}  induces a  pointwise convergence
of the normalised criterion~$\ell_n/n$ 
to some limiting function $\ell$,  and is weaker than assuming uniform
convergence.  Proposition~\ref{prop:IdentificationEllTheta}
states that  the former limiting  function $\ell$ identifies  the true
value of the 
parameter   $\ts$,  as  the   unique  point   where  it   attains  its
maximum.     

Here    is    the    outline    of    the    current    section.    In
Subsection~\ref{sect:BP}, we  recall some preliminary  results linking
RWRE  with  branching  processes  in  random  environment  (BPRE).  In
Subsection~\ref{sect:LLN},  we  define  the limiting  function  $\ell$
involved  in Theorem~\ref{thm:LimitEllTheta} thanks  to a  law of
large numbers  (LLN) for Markov chains. 
In  Subsections~\ref{sect:UnifConv}   and~\ref{sect:Ident},  we  prove
Theorem~\ref{thm:LimitEllTheta} and Proposition~\ref{prop:IdentificationEllTheta}, respectively.  
 It is important to note that the limiting function $\ell$ exists as soon as the walk is
transient. However, it is finite in the ballistic case and everywhere 
infinite in 
the sub-ballistic regime of uniformly elliptic walks, see Lemma~\ref{lemm:ue}. 
This latter
fact     prevents    the     identification    result     stated    in
Proposition~\ref{prop:IdentificationEllTheta}  and   explains  why  we
obtain consistency only in the ballistic regime. 
From all  these ingredients, the consistency of  $\widehat \t_n$, that
is,    the     proof    of    Theorem~\ref{theo:consistency}    easily
follows.


\subsection{From RWRE to branching processes}\label{sect:BP}
We start  by recalling  some already known  results linking  RWRE with
branching processes in random  environment (BPRE). Indeed, it has been
previously observed in \cite{KKS} that 
for    fixed   environment   $\w=\{\w_x\}_{x\in\Z}$,    under   quenched
distribution $P_{\w},$ the sequence $ L_n^n, L_{n-1}^n,\dots, L^n_0$ 
of the number of left steps performed by the process $X_{[0,T_n]}$ from
sites $n, n-1,\dots, 0$, 
has  the  same  distribution  as  the  first  $n$  generations  of  an
inhomogeneous branching process with  one immigrant at each generation
and with geometric offspring.  

More precisely, for  any fixed  value $n\in  \N^*$ and
fixed environment $\w$, consider a family of independent random
variables 
$\{ \xi_{k,i} \, : \, k\in \{1,\dots,n\}, \, i\in\N \}$ 
such that for each fixed value
$k\in \{1,\dots,n\}$, the $\{\xi_{k,i}\}_{i\in\N}$  are i.i.d. with a geometric
distribution on $\N$ of parameter $\w_{n-k}$, namely 
\[
  \forall m\in\N, \quad P_{\w}(\xi_{k,i}=m)=(1-\w_{n-k})^m\w_{n-k}.
\]
Then,   let   us   consider   the   sequence   of   random   variables
$\{Z_k^n\}_{k=0,\dots, n}$ defined recursively by 
\[
  Z_0^n=0,\quad\mbox{and for}\ k=0,\dots,n-1, \quad
Z_{k+1}^n=\sum_{i=0}^{Z_k^n}\xi_{k+1,i} .
\]
The sequence $\{Z_k^n\}_{k=0,\dots, n}$ forms an inhomogeneous BP
with immigration  (one immigrant  per generation corresponding  to the
index $i=0$ in the above sum) and whose offspring law depends on
$n$ (hence  the superscript $n$  in notation $Z_k^n$).  Then,  we obtain
that 
\[ 
  (L_n^n,L_{n-1}^n,\dots,L_0^n) \sim (Z_0^n,Z_1^n,\dots,Z_n^n), 
\] 
where $\sim$ means equality in distribution.
 When the environment is random as well, and since $(\w_0,\dots,\w_n)$ has the same 
 distribution as $(\w_n,\dots,\w_0)$,  it follows that under  the annealed
 law  $\PPs$, the  sequence $L_n^n,L_{n-1}^n,\dots,  L^n_0$ has  the same
 distribution  as a  branching  process in  random environment  (BPRE)
 $Z_0,\dots, Z_n,$ 
defined by 
\begin{equation} \label{eq:BPRE}
Z_0=0, \quad \mbox{and for } k=0,\dots,n,\quad
Z_{k+1}=\sum_{i=0}^{Z_k}\xi'_{k+1,i},
\end{equation}
with $\{\xi'_{k,i}\}_{k\in\N^*;i\in\N}$ independent and
\[
  \forall m\in\N, \quad P_{\w}(\xi'_{k,i}=m)=(1-\w_{k})^m\w_{k} . 
\]
Now,  when  the
  environment is assumed to be i.i.d., this BPRE is under annealed law
  a homogeneous Markov chain.  We explicitly state this result because it is
  important; however its proof is immediate and therefore omitted. 
  
\begin{prop}\label{prop:trans}
Suppose   that  $\{\w_n\}_{n\in\N}$   are  i.i.d.   with  distribution
$\nu_{\t}$.  Then the sequence $\{Z_n\}_{n\in\N}$ is a homogeneous Markov chain
whose    transition   kernel  $Q_{\t}$    is   given    by
\begin{equation}
  \label{eq:Qtransition}
Q_{\t}(x,y)=\binom{x+y}{x}\ee^{\phi_\t(x,y)}=  \binom{x+y}{x} \int_0^1
a^{x+1}(1-a)^y \dd\nu_{\t}(a) .
\end{equation}
 \end{prop}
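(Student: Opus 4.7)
The plan is to carry out the short computation that the authors leave implicit, making explicit why the i.i.d.\ assumption on $\omega$ is what upgrades the quenched Markov property to an annealed homogeneous Markov property.

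First I would work under the quenched law $P_\omega$. Given $Z_k=x$, the variable $Z_{k+1}=\sum_{i=0}^{x}\xi'_{k+1,i}$ is a sum of $x+1$ i.i.d.\ geometric random variables with parameter $\omega_{k+1}$, hence negative binomial, so that
\begin{equation*}
P_\omega(Z_{k+1}=y\mid Z_0,\dots,Z_k)=P_\omega(Z_{k+1}=y\mid Z_k)=\binom{x+y}{x}\omega_{k+1}^{x+1}(1-\omega_{k+1})^y.
\end{equation*}
The decisive observation is that this quantity depends on the environment only through the single coordinate $\omega_{k+1}$, whereas $(Z_0,\dots,Z_k)$ is measurable with respect to $(\omega_1,\dots,\omega_k)$ together with the innovation variables $\{\xi'_{j,i}:j\le k\}$.

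Next I would compute the annealed joint distribution of $(Z_0,\dots,Z_{k+1})$ along a path $(0,x_1,\dots,x_{k+1})$ by writing
\begin{equation*}
\PP^\theta(Z_0=0,Z_1=x_1,\dots,Z_{k+1}=x_{k+1})=\E^\theta\!\left[\prod_{j=0}^{k}\binom{x_j+x_{j+1}}{x_j}\omega_{j+1}^{x_j+1}(1-\omega_{j+1})^{x_{j+1}}\right].
\end{equation*}
Since the factor indexed by $j$ depends only on $\omega_{j+1}$ and the $\omega_\ell$'s are i.i.d.\ under $\P^\theta$, the expectation factorises, each factor producing exactly
\begin{equation*}
\binom{x_j+x_{j+1}}{x_j}\int_0^1 a^{x_j+1}(1-a)^{x_{j+1}}\,\dd\nu_\theta(a)=\binom{x_j+x_{j+1}}{x_j}\ee^{\phi_\theta(x_j,x_{j+1})}=Q_\theta(x_j,x_{j+1}).
\end{equation*}
Summing over $x_1,\dots,x_{k-1}$ and dividing by the analogous expression for $(Z_0,\dots,Z_k)$ yields $\PP^\theta(Z_{k+1}=x_{k+1}\mid Z_0,\dots,Z_k)=Q_\theta(Z_k,x_{k+1})$, which is both the Markov property and the homogeneity statement with kernel $Q_\theta$.

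There is no real obstacle: the argument is a direct application of Fubini plus the negative binomial formula. The only point that deserves care, and the sole reason the statement is non-tautological, is that the quenched conditional distribution at step $k+1$ uses a piece of the environment, namely $\omega_{k+1}$, that is independent of everything needed to describe $(Z_0,\dots,Z_k)$; this independence is precisely what lets one integrate $\omega$ step by step and obtain a product of identical kernels, rather than a kernel that would depend on $k$ or on the past trajectory.
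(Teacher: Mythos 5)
Your proof is correct and is precisely the ``immediate'' computation that the paper declares and omits: the quenched transition is negative binomial with parameter $\omega_{k+1}$, and the i.i.d.\ structure of the environment factorises the annealed path probability into a product of the kernels $Q_\theta(x_j,x_{j+1})$. The only cosmetic slip is the phrase about ``summing over $x_1,\dots,x_{k-1}$'': to condition on the full past one simply divides the $(k+2)$-step joint law by the $(k+1)$-step one, and the constancy of that ratio in $x_1,\dots,x_{k-1}$ is exactly the Markov property, so no summation is needed.
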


Finally,       going      back      to~\eqref{eq:l}       and      the
definition~\eqref{eq:PhiTheta} of $\phi_{\t}$, the annealed log-likelihood satisfies the following equality 
\begin{equation}\label{eq:logvraisannealed}
\ell_n(\theta)
\sim \sum_{k=0}^{n-1}\phi_{\theta}(Z_k,Z_{k+1}) \mbox{ under } \PPs.
\end{equation}
\begin{rema}
Up to an additive constant (not depending on $\theta$), the right-hand
side of~\eqref{eq:logvraisannealed}  is exactly the  log-likelihood of
the Markov chain $\{Z_k\}_{0\le k \le n}$.
Indeed, we have
\[
\log Q_{\t}(x,y)=\log \binom{x+y}{x} +\phi_{\t}(x,y) , \quad \forall x,y\in \N.
\]
\end{rema}
We prove in the next section a weak law of large numbers for 
$\{\phi_\t(Z_k,Z_{k+1})\}_{k \in \N}$ and according to~\eqref{eq:logvraisannealed}, this is sufficient to obtain a weak convergence of $\ell_n(\t) / n$.

\subsection{Existence of a limiting function} \label{sect:LLN}
It   was    shown   by   \citet[Theorem    3.3,][]{Key}   that   under
Assumption~\ref{assu:trans} (and for a non-necessarily 
i.i.d. environment),  the sequence  $\{Z_n\}_{n \in \N}$  converges in
annea\-led law to a limit 
random  variable  $\tilde  Z_0$  which  is almost  surely  finite.  An
explicit construction  of $\tilde Z_0$  is given by Equation  (2.2) in
\cite{Ro}. In fact, a complete \emph{stationary version} $\{\tilde Z_n\}_{n
  \in \Z}$  of the sequence $\{Z_n\}_{n  \in \N}$ is given  and such a
construction allows for an ergodic theorem.  In the i.i.d. environment
setup, we obtain more precise results than what is provided by~\citet[Theorem 3.3,][]{Key}, as $\{Z_n\}_{n \in \N}$ is a Markov chain. 
Thus Theorem~\ref{theo:limitlaw} below is specific to our setup: geometric offspring distribution, one immigrant per
generation and i.i.d.  environment. 
We  specify the  form of  the  limiting distribution  of the  sequence
$\{Z_n\}_{n \in \N}$ and characterise its first moment. We later rely on these results to establish a strong law of large numbers for the sequence $\{\phi_\t(Z_k,Z_{k+1})\}_{k\in \N}$.

\begin{theo}\label{theo:limitlaw} 
Under Assumption~\ref{assu:trans}, for all $\theta \in\Theta$ the following assertions hold
 \begin{itemize}
\item [i)] The Markov  chain $\{Z_n\}_{n\in \N}$ is positive recurrent
  and admits a unique invariant
probability measure $\pi_{\t}$ satisfying
\[  \lim_{n \rightarrow \infty} \PPt(Z_n = k) = \pi_{\t}(k), \quad \forall k \in \N .\] 
\item [ii)] 
Moreover, for all $k\in \N$, we have
  $\pi_{\t}(k)=\Et[S(1-S)^k]$, where 
\[
S := (1+\rho_1 + \rho_1\rho_{2}+\dots + \rho_1\dots\rho_n+\dots)^{-1}\in (0,1). 
\]
In particular, we have $\sum_{k\in \N} k\pi_{\t}(k)=\sum_{n \ge 1} (\Et
\rho_0)^n$, and the distribution~$\pi_{\t}$ has a finite first order
moment only in the ballistic case.
\end{itemize}
 \end{theo}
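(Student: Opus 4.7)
The strategy is to identify the limiting law $\pi_\t$ directly from the RWRE interpretation of $\{Z_n\}$, and only then to lift this into the Markov-chain statement of part~(i). By the identity in distribution recalled in Section~\ref{sect:BP}, under $\PPt$ the variable $Z_n$ has the same law as $L_0^n$, the number of left-steps from site~$0$ of the walk $X$ observed up to $T_n$. Under Assumption~\ref{assu:trans} we have $X_t\to+\infty$ $\PPt$-a.s., so every site is visited only finitely often and $L_0^n\nearrow L_0^\infty<\infty$ $\PPt$-a.s. Consequently $Z_n$ converges in distribution under $\PPt$ to $L_0^\infty$, and the whole problem reduces to computing the law of $L_0^\infty$.

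The quenched law of $L_0^\infty$ is obtained by excursion decomposition at site~$0$. A standard gambler's-ruin computation in the environment $\w$ (letting the right endpoint tend to infinity) gives
\[
S := P_\w^{1}(X\text{ never visits }0)= \bigl(1+\rho_1+\rho_1\rho_2+\cdots\bigr)^{-1},
\]
with $S>0$ $\Pt$-a.s.\ thanks to transience. By the strong Markov property, the outcomes of successive visits to~$0$ are, conditionally on $\w$, i.i.d., with three mutually exclusive possibilities: a left-step followed by a return to $0$ (probability $1-\w_0$, contributing $1$ to $L_0^\infty$); a right-step followed by a return to $0$ (probability $\w_0(1-S)$, contributing $0$); or a right-step after which the walk never returns to $0$ (probability $\w_0 S$, terminating the sequence). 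Summing the resulting geometric series yields
\[
P_\w(L_0^\infty=k)=S_0(1-S_0)^k,\qquad S_0:=\frac{\w_0 S}{1-\w_0(1-S)}=\bigl(1+\rho_0+\rho_0\rho_1+\cdots\bigr)^{-1}.
\]
Since $(\rho_0,\rho_1,\ldots)$ and $(\rho_1,\rho_2,\ldots)$ have the same law under the i.i.d.\ hypothesis, taking the $\Et$-expectation gives $\PPt(L_0^\infty=k)=\Et[S(1-S)^k]=:\pi_\t(k)$, a bona fide probability measure on $\N$.

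To conclude part~(i), note that Assumption~\ref{assu:trans} forces $\w_0\in(0,1)$ $\nu_\t$-a.s., so by the explicit formula of Proposition~\ref{prop:trans} we have $Q_\t(x,y)>0$ for every $x,y\in\N$; hence $\{Z_n\}$ is irreducible and aperiodic. The pointwise convergence $\PPt(Z_n=\cdot)\to\pi_\t(\cdot)$ yields convergence in total variation by Scheff\'e, and passing to the limit in $\PPt(Z_{n+1}=\cdot)=\PPt(Z_n=\cdot)\,Q_\t$ gives $\pi_\t Q_\t=\pi_\t$. Thus $\pi_\t$ is an invariant probability measure, unique by irreducibility, and $\{Z_n\}$ is positive recurrent. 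For part~(ii), Fubini and independence of the $\rho_i$'s give
\[
\sum_{k\in\N} k\,\pi_\t(k)=\Et\bigl[(1-S)/S\bigr]=\sum_{n\ge 1}\Et[\rho_1\cdots\rho_n]=\sum_{n\ge 1}(\Et\rho_0)^n,
\]
which is finite if and only if $\Et\rho_0<1$, i.e.\ in the ballistic regime. The main technical point is the quenched excursion decomposition: one must carefully justify through the strong Markov property that the outcomes at successive visits to $0$ are conditionally i.i.d.\ given $\w$, and that the gambler's-ruin series defining $S$ converges $\Pt$-a.s.\ in the transient regime.
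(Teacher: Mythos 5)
Your argument is correct, but it reaches the identification $\pi_\t(k)=\Et[S(1-S)^k]$ by a genuinely different route than the paper. The paper stays entirely on the branching-process side: it computes the quenched generating function $F_n(u)=E_{\w}(u^{Z_n})$ via the recursion $F_{n+1}(u)=F_n[H_{n+1}(u)]\,H_{n+1}(u)$, solves it in closed form, and concludes that $Z_n$ is \emph{exactly} geometric under $P_\w$ with parameter $S_n=(1+\rho_n+\rho_n\rho_{n-1}+\dots+\rho_n\cdots\rho_1)^{-1}$ for every finite $n$; reversing the i.i.d.\ environment and letting $\tilde S_n\searrow S$ then yields the limit by dominated convergence. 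You instead go back to the walk: using $Z_n\sim L_0^n$ and the monotone convergence $L_0^n\nearrow L_0^\infty<\infty$ granted by transience, you identify the law of $L_0^\infty$ through a gambler's-ruin/excursion decomposition at the origin. Your algebra checks out: $S_0=\w_0 S/(1-\w_0(1-S))=(1+\rho_0+\rho_0\rho_1+\cdots)^{-1}$, and shift-invariance of the i.i.d.\ environment converts $\Et[S_0(1-S_0)^k]$ into $\Et[S(1-S)^k]$. What the paper's computation buys is an exact finite-$n$ quenched statement with no appeal to the strong Markov property of the walk; what yours buys is a probabilistic explanation of why the limit is a mixed geometric (the number of left excursions before the final escape to the right) and an automatic proof that $\pi_\t$ is a probability measure, since it is the law of the a.s.\ finite variable $L_0^\infty$, whereas the paper checks normalisation by Fubini--Tonelli. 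Your handling of part (i) (Scheff\'e, passage to the limit in $\PPt(Z_{n+1}=\cdot)=\PPt(Z_n=\cdot)Q_\t$, irreducibility from positivity of $Q_\t$) and the first-moment identity $\Et(S^{-1}-1)=\sum_{n\ge1}(\Et\rho_0)^n$ coincide in substance with the paper's. The one step you should make fully explicit is that a left step from $0$ is followed by a return to $0$ with quenched probability one: this is a second, distinct place where transience to the right (Assumption~\ref{assu:trans}) enters your excursion decomposition, in addition to guaranteeing $S>0$.
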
 

\begin{proof}
We introduce the quenched probability generating function of the random
variables   $\xi'_{n,i}$  and~$Z_n$   introduced  in~\eqref{eq:BPRE},
respectively defined for any $u \in [0,1]$ by 
\begin{equation*}
   H_n(u) := E_{\w}\left(u^{ \xi'_{n,0}}\right)=\frac{\w_{n}}{1-(1-\w_{n})u}, \quad 
\mbox{and} \quad F_n(u) := E_{\w}\left(u^{ Z_n}\right),
\end{equation*}
as well as the quantities $S_n$ and $\tilde S_n$ defined as
\begin{align*} 
S_n^{-1}&=1+\rho_n + \rho_n\rho_{n-1}+\dots + \rho_n\dots\rho_1 ,\\
\tilde S_n^{-1}&=1+\rho_1 + \rho_1\rho_{2}+\dots +\rho_1\dots\rho_n .
\end{align*}
According to~\eqref{eq:BPRE}, we have
\[
  F_{n+1}(u)=F_n[H_{n+1}(u)]\times H_{n+1}(u) ,
\]
and by induction 
\[
  F_n(u)=\frac{\w_1\dots \w_n}{A_n(\w)-B_n(\w)u},
\]
   where $A_n$ and $B_n$ satisfy the relations 
\[
  \left \{\begin{array}{l}
A_{n+1}(\w)=A_n(\w)-B_n(\w)\times\w_{n+1} , \\
B_{n+1}(\w)=A_n(\w)\times(1-\w_{n+1}) . 
\end{array}\right.
\]
A simple computation yields
\[
  A_{n}(\w)=\w_1\dots\w_n\,S_n^{-1} 
\quad \mbox{ and } \quad 
  B_{n}(\w)=\w_1\dots\w_n(S_n^{-1}-1).
\]
Finally, we have for any $u \in [0,1]$
\[
  F_n(u)=\frac{S_n}{1-(1-S_n)u}.
\]
This   means that  under quenched  law $P_{\w}$,  the  random variable
$Z_n$ follows a geometric distribution on $\N$ with parameter $S_n$. 
Note  that $S_n$  and $\tilde  S_n$ have  the same  distribution under
$\Pt$, implying that $F_n(u)$ has the same distribution as 
\[
  \frac{\tilde S_n}{1-(1-\tilde S_n)u}.
\]
Under Assumption~\ref{assu:trans}, we have $\Pt$-a.s.
\[
  \lim_{n\to\infty}\frac                     1n                    \log
(\rho_1\dots\rho_n)=\lim_{n\to\infty}\frac
1n\sum_{i=1}^n\log\rho_i=\Et\log\rho_0:=m <0,
\]
and  hence 
\[
  \Pt\left(\exists  \ n(\w),\  s.t. \  \forall  n>n(\w), \
  \rho_1\dots \rho_n\leq \ee^{nm/2}\right)=1.
\]
Then, as $n\to  +\infty$, $\tilde S_n \searrow S=(1+\rho_1 + \rho_1\rho_{2}+\dots )^{-1}$ $\Pt$-a.s. 
 with $\Pt(0 < S < 1 )=1$.  As  a
consequence,  the  quenched   probability  generating  function  $F_n(u)$
converges in distribution under $\Pt$ to 
\[
  F(u)=\frac{S}{1-(1-S)u},
\]
the probability  generating function of a  geometric distribution with
parameter $S$. 
Under annealed law, for any $k\in \N$ we have

\[
\PPt(Z_n=k)=\Et  P_{\w}(Z_n=k)=\Et  \left[  {S_n}  \left(  1  -  {S_n}
  \right)^k \right] = \Et \left[ \tilde S_n \left( 1 - \tilde S_n \right)^k \right].
\]
Since
 $0<\tilde  S_n<1$, dominated convergence implies that for all $k\in\N,$
\begin{equation}
  \label{eq:limite_Zn}
 \lim_{n\to +\infty} \PPt(Z_n=k) =
\Et \left[ S \left(1 - {S}\right)^k \right] := \pi_{\t}(k) .
\end{equation}
As an immediate consequence, we obtain
\[
\sum_{k\in \N} k\pi_{\t}(k)=\Et \left( S^{-1}-1
\right)=\sum_{n=1}^{\infty} (\Et \rho_0)^n ,
\] 
Moreover,   by    Fubini-Tonelli's  theorem   and
$\Pt(0<S<1)=1$, we have 
\[
\sum_{k\in \N} \pi_{\t}(k) = 1 \quad \mbox{ and } \quad \pi_{\t}(k)>0,
\quad \forall k \in \N.
\]
Thus the measure $\pi_{\t}$ on $\N$ is a probability measure and thanks 
to~\eqref{eq:limite_Zn}, it is invariant.
We
note that $\{Z_n\}_{n \in \N}$ is irreducible   as the
transitions  $Q_{\t}(x,y)$ defined  by \eqref{eq:Qtransition}  are positive
and the  measure $\nu_{\t}$  is not degenerate.   Thus, the  chain is
positive    recurrent   and    $\pi_{\t}$   is    unique    \cite[see   for
instance][Theorem 1.7.7]{Norris}.
%
This concludes the proof. 
\end{proof}

 Let us define $\{\tilde Z_n\}_{n\in \N}$ as the stationary  Markov chain with
 transition matrix  $Q_{\ts}$ defined by~\eqref{eq:Qtransition}  and initial
 distribution $\pi^\star:=\pi_{\t^\star}$ introduced in Theorem~\ref{theo:limitlaw}. It 
 will not be confused with  $\{Z_n\}_{n\in \N}$ from \eqref{eq:BPRE}. 
 We let 
$\ell(\t)$ be defined as 
\begin{equation}
  \label{eq:ell}
\ell(\theta)=\EEs[\phi_{\theta}(\tilde Z_0,\tilde Z_1)] \in [-\infty,0],
\end{equation}
where $\phi_\theta$ is defined according to~\eqref{eq:PhiTheta}. 
(Note that the quantity $\ell(\theta)$ may not necessarily be finite). 
As a consequence of the  
irreducibility of the chain $\{Z_n\}_{n\in \N}$
 and  Theorem~\ref{theo:limitlaw},  we  obtain the  following  ergodic
 theorem \cite[see for instance][Theorem 1.10.2]{Norris}. 

\begin{prop}\label{prop:lln}
Under Assumption~\ref{assu:trans}, for all $\theta\in\Theta$, 
the following ergodic theorem holds 
\[
  \lim_{n\to\infty}\frac1n
\sum_{k=0}^{n-1}\phi_{\theta}(Z_k,Z_{k+1})
=\ell(\theta) \quad \PPs\mbox{-almost surely}.
\]
\end{prop}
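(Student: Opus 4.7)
The plan is to apply the classical ergodic theorem for positive recurrent Markov chains to the bivariate chain $\{(Z_k,Z_{k+1})\}_{k\in\N}$, after handling the fact that $\phi_\theta$ may fail to be integrable against the invariant measure.

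First, I would observe that by Theorem~\ref{theo:limitlaw} the chain $\{Z_n\}_{n\in\N}$ is irreducible and positive recurrent with invariant probability $\pi_{\ts}$ under $\PPs$. The bivariate process $\{(Z_k,Z_{k+1})\}_{k\in\N}$ is then itself an irreducible positive recurrent Markov chain on $\N^2$, with stationary distribution $\mu_{\ts}(x,y) := \pi_{\ts}(x) Q_{\ts}(x,y)$ corresponding exactly to the law of $(\tilde Z_0, \tilde Z_1)$. Under $\PPs$ we start deterministically from $Z_0 = 0$, but positive recurrence makes the ergodic theorem insensitive to the initial distribution.

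Next, I would exploit the sign of $\phi_\theta$: from the definition~\eqref{eq:PhiTheta} and $a\in(0,1)$, we have $\phi_\theta(x,y) \leq 0$ on $\N^2$. Two cases arise. If $\ell(\theta) = \EEs[\phi_\theta(\tilde Z_0,\tilde Z_1)]$ is finite, then $\phi_\theta$ is $\mu_{\ts}$-integrable, and Norris' Theorem~1.10.2 applied to the bivariate chain yields the desired almost sure convergence immediately. If $\ell(\theta) = -\infty$, I would truncate from below: for each $M>0$, set $\phi_\theta^{(M)} := \max(\phi_\theta, -M)$, which is bounded. The ergodic theorem applied to $\phi_\theta^{(M)}$ gives
\[
\frac{1}{n}\sum_{k=0}^{n-1}\phi_\theta^{(M)}(Z_k,Z_{k+1}) \xrightarrow[n\to\infty]{} \EEs\bigl[\phi_\theta^{(M)}(\tilde Z_0, \tilde Z_1)\bigr] \quad \PPs\mbox{-a.s.}
\]
Since $\phi_\theta \leq \phi_\theta^{(M)}$, taking $\limsup$ in $n$ bounds the $\limsup$ of the original Cesaro average by the right-hand side. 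Letting $M \to +\infty$ and applying monotone convergence (the integrands decrease to $\phi_\theta$), the right-hand side decreases to $\ell(\theta) = -\infty$, forcing the Cesaro average of $\phi_\theta(Z_k,Z_{k+1})$ to diverge to $-\infty$ almost surely.

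The main difficulty is not probabilistic but rather the possible non-integrability of $\phi_\theta$ against $\mu_{\ts}$; the truncation argument handles this cleanly precisely because $\phi_\theta$ is bounded above by $0$. All remaining ingredients -- positive recurrence of $\{Z_n\}$, uniqueness of $\pi_{\ts}$, and the explicit form of the bivariate invariant measure -- are supplied by Theorem~\ref{theo:limitlaw} and Proposition~\ref{prop:trans}.
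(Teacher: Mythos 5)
Your proof is correct and takes essentially the same route as the paper, which simply invokes the Markov-chain ergodic theorem (Norris, Theorem~1.10.2) on the strength of the irreducibility and positive recurrence established in Theorem~\ref{theo:limitlaw}. The passage to the bivariate chain $\{(Z_k,Z_{k+1})\}_k$ and the truncation argument handling the case $\ell(\theta)=-\infty$ (legitimate since $\phi_\theta\le 0$) are precisely the details the paper leaves implicit in that citation.
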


\subsection{Local uniform convergence and finiteness of the limit} 
\label{sect:UnifConv} 
According to~\eqref{eq:logvraisannealed} and Proposition~\ref{prop:lln}, we obtain 
\begin{equation} \label{eq:cv_loglik}
\lim_{n\to\infty}\frac1n     \ell_n(\theta)     =    \ell(\theta)\quad
\mbox{in} \quad \PPs \mbox{-probability}.
\end{equation}
To  achieve  the  proof  of  Theorem~\ref{thm:LimitEllTheta},  it
remains  to prove  that the    convergence    is    "locally    uniform" and that  the limit  $\ell(\theta)$  is finite  for  any value  of
$\t$.     The local  uniform convergence is  given by
Lemma~\ref{lemm:cv_du_sup} below while Proposition~\ref{prop:ell_fini} gives  a sufficient condition  for the latter fact to occur.

\begin{lemm}\label{lemm:cv_du_sup}
Under   Assumption~\ref{assu:trans},  the   following   local  uniform
convergence holds: for  any   open   subset  $U\subset
\Theta$, 
\[
\frac  1n   \sum_{x=0}^{n-1}\sup_{\t\in U} \phi_{\t} (L_{x+1}^n,L_x^n)
\xrightarrow[n \to \infty]{} \EEs \Big( \sup_{\t\in U } \phi_\t (\tilde Z_0,\tilde Z_1) \Big) \quad
\mbox{in } \PPs \mbox{-probability } .
\]
\end{lemm}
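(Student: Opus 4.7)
The plan is to transfer the question to the branching process in random environment of Subsection~\ref{sect:BP} and then apply a generalised Birkhoff ergodic theorem, closely mirroring the strategy used for Proposition~\ref{prop:lln}. Throughout, set $\psi_U(x,y) := \sup_{\t \in U} \phi_\t(x,y)$.

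\textbf{Step 1 (Transfer to the BPRE).} By the distributional identity recalled in Subsection~\ref{sect:BP}, the vector $(L_n^n, L_{n-1}^n, \ldots, L_0^n)$ has, under $\PPs$, the same distribution as the BPRE $(Z_0, Z_1, \ldots, Z_n)$ defined in~\eqref{eq:BPRE}. Thus $L_{n-k}^n$ corresponds to $Z_k$, and the reindexing $k = n - x - 1$ yields the equality in distribution
\[
\sum_{x=0}^{n-1} \psi_U(L_{x+1}^n, L_x^n) \stackrel{d}{=} \sum_{k=0}^{n-1} \psi_U(Z_k, Z_{k+1}) \quad \text{under } \PPs.
\]
Since the target limit is deterministic, it suffices to establish convergence in $\PPs$-probability of the right-hand side divided by $n$.

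\textbf{Step 2 (Sign and measurability of $\psi_U$).} Since $\phi_\t(x,y) = \log \int_0^1 a^{x+1}(1-a)^y \, \dd\nu_\t(a) \leq 0$, we have $\psi_U \leq 0$. By Assumption~\ref{assu:cont}, the map $\t \mapsto \phi_\t(x,y)$ is continuous on $\Theta$; since $U \subset \Theta \subset \R^d$ is separable, the supremum over $U$ equals the supremum over a countable dense subset of $U$, which makes $\psi_U : \N^2 \to [-\infty, 0]$ measurable as a countable supremum of measurable functions.

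\textbf{Step 3 (Ergodic theorem).} By Theorem~\ref{theo:limitlaw}, the chain $\{Z_n\}_{n \in \N}$ is irreducible and positive recurrent with invariant distribution $\pi^\star := \pi_{\ts}$. As $\psi_U^+ \equiv 0$ is trivially integrable with respect to $\pi^\star \otimes Q_{\ts}$, the generalised Birkhoff ergodic theorem for positive recurrent Markov chains (see e.g.~\cite[Theorem~1.10.2]{Norris}) applies to the non-positive function $\psi_U$, independently of the initial state $Z_0 = 0$, and delivers
\[
\frac{1}{n} \sum_{k=0}^{n-1} \psi_U(Z_k, Z_{k+1}) \xrightarrow[n \to \infty]{} \EEs\bigl[\psi_U(\tilde Z_0, \tilde Z_1)\bigr] \in [-\infty, 0], \quad \PPs\text{-almost surely}.
\]
Combined with Step~1, this yields the claimed convergence in $\PPs$-probability.

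\textbf{Main obstacle.} The delicate point is that $\EEs[\psi_U(\tilde Z_0, \tilde Z_1)]$ may well equal $-\infty$ when $U$ is large, so the convergence has to be interpreted in the generalised sense $\PPs\bigl( n^{-1} \sum_{k=0}^{n-1} \psi_U(Z_k, Z_{k+1}) \geq -M \bigr) \to 0$ for every $M > 0$. The one-sided bound $\psi_U \leq 0$ is precisely what allows Birkhoff's theorem to be invoked without additional integrability assumptions; were this sign information unavailable, one would have to truncate $\psi_U$ from below and control the remainder uniformly in the truncation, substantially complicating the argument.
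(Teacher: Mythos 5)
Your proposal is correct and follows essentially the same route as the paper's own (much terser) proof: transfer the sum to the branching process $\{Z_k\}$ via the distributional identity of Subsection~\ref{sect:BP}, observe that $\sup_{\t\in U}\phi_\t\leq 0$ so its expectation under the stationary law exists in $[-\infty,0]$, and invoke the ergodic theorem for the positive recurrent chain. Your additional remarks on measurability of the supremum and on the interpretation of the convergence when the limit is $-\infty$ are sound elaborations of points the paper leaves implicit.
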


\begin{proof}[Proof of Lemma~\ref{lemm:cv_du_sup}]
Let us fix an open subset $U\subset \Theta$ and note that 
\begin{equation*}
\frac  1n   \sum_{x=0}^{n-1}
\sup_{\t\in U} \phi_{\t} (L_{x+1}^n,L_x^n) \sim \frac 1n \sum_{k=0}^{n-1}
\Phi_U (Z_k,Z_{k+1}) ,
\end{equation*}
where we have $\Phi_U:= \sup_{\t\in U}\phi_{\t}$.  As the function $\Phi_U$ is non-positive,
the expectation~$\EEs( \Phi_U (\tilde Z_0,\tilde Z_{1}))$ exists and
relying again on the ergodic theorem for Markov chains, we obtain the desired
result.  
\end{proof}

\begin{prop}\label{prop:ell_fini}
(Ballistic case). As soon as
  \begin{equation} \label{eq:ballistique}
   \Es(\rho_0)<1,
  \end{equation}
the limit $\ell(\theta)$ is
finite for  any value $\theta \in \Theta$.  
\end{prop}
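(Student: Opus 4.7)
The plan is to bound $\phi_\theta$ from below by an affine function of its arguments and then to use the finiteness of the first moment of the invariant distribution $\pi^\star$, which is precisely where the ballistic hypothesis~\eqref{eq:ballistique} enters via Theorem~\ref{theo:limitlaw}.

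First, I would observe that since $a \in (0,1)$, we have $\phi_\theta(x,y) \leq 0$ for every $x,y \in \N$ and every $\theta$, so the upper bound $\ell(\theta) \leq 0$ is immediate. Only the lower bound requires work. Applying Jensen's inequality to the concave function $\log$ inside the definition \eqref{eq:PhiTheta} of $\phi_\theta$, I obtain
\begin{equation*}
\phi_\theta(x,y) \;\geq\; (x+1)\,\Et[\log \w_0] \;+\; y\,\Et[\log(1-\w_0)].
\end{equation*}
Both expectations on the right-hand side are finite. Indeed, by Assumption~\ref{assu:BoundedBelow}, $\Et[\log(1-\w_0)] \geq -M$ for some finite $M$, uniformly in $\theta \in \Theta$. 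For the other term, Jensen's inequality again gives $-\Et[\log \w_0] \leq \log \Et[1/\w_0] = \log(1 + \Et \rho_0) < \log 2$ under Assumption~\ref{assu:ballistic}, as already noted in the paper. Hence there exists a (possibly $\theta$-dependent) constant $C_\theta > 0$ such that
\begin{equation*}
\phi_\theta(x,y) \;\geq\; -C_\theta\,(1 + x + y), \qquad \forall (x,y) \in \N^2.
\end{equation*}

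Next, I would take the expectation under $\PPs$ with respect to the pair $(\tilde Z_0, \tilde Z_1)$. Both coordinates are distributed according to $\pi^\star = \pi_{\ts}$ by stationarity, so
\begin{equation*}
\ell(\theta) \;\geq\; -C_\theta\bigl(1 + 2\,\Es \tilde Z_0\bigr).
\end{equation*}
By Theorem~\ref{theo:limitlaw}(ii), the first moment of $\pi^\star$ equals $\sum_{n \geq 1} (\Es \rho_0)^n$, which is finite precisely under the ballistic hypothesis~\eqref{eq:ballistique}. Combining this with the trivial upper bound yields $\ell(\theta) \in (-\infty, 0]$.

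The argument is essentially a direct application of Jensen plus Theorem~\ref{theo:limitlaw}, so there is no serious obstacle. The only subtle point to be careful about is that the two expectations $\Et[\log \w_0]$ and $\Et[\log(1-\w_0)]$ must be finite \emph{at the parameter $\theta$} appearing inside $\phi_\theta$ (not at $\ts$), which is why Assumption~\ref{assu:BoundedBelow} is stated for every $\theta \in \Theta$ and why the remark following it (valid under Assumption~\ref{assu:ballistic}) is invoked; the finiteness of the first moment of the stationary distribution, on the other hand, depends only on $\ts$ through the ballistic hypothesis~\eqref{eq:ballistique}.
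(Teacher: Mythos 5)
Your proof is correct and follows essentially the same route as the paper: the same Jensen lower bound $\phi_\theta(x,y)\geq (x+1)\,\Et[\log\w_0]+y\,\Et[\log(1-\w_0)]$ combined with the finiteness of the first moment of $\pi^\star$ from Theorem~\ref{theo:limitlaw}. The only (harmless) difference is that you take the expectation of this bound directly under the stationary law defining $\ell(\theta)$ in~\eqref{eq:ell}, whereas the paper bounds the Ces\`aro sums $\frac1n\sum_k\phi_\theta(Z_k,Z_{k+1})$ and passes to the limit via the ergodic theorem; your version is if anything slightly more direct, and your explicit justification that both expectations $\Et[\log\w_0]$ and $\Et[\log(1-\w_0)]$ are finite uniformly in $\theta$ is a point the paper leaves implicit.
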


\begin{proof}[Proof of Proposition~\ref{prop:ell_fini}]
  For all $x\in\N, y\in\N$, by using Jensen's inequality, we may write 
\begin{equation} \label{equa:MinoPhiTheta}
\log\int_0^1a^{x+1}(1-a)^{y}\dd\nu_{\theta}(a) \geq 
(x+1)\Et [\log(w_0)] +y \Et [\log(1- w_0)] .  
\end{equation}
This implies that for any $k\in \N$,
\begin{equation*}
\phi_{\theta}(Z_k,Z_{k+1}) \geq 
(Z_k+1)\E^{\t} [\log(w_0)] +Z_{k+1} \E^{\t} [\log(1- w_0)],   
\end{equation*}
and in particular 
\begin{equation}\label{equa:MinoEllTheta}
\frac    1n\sum_{k=0}^{n-1}\phi_{\theta}(Z_k,Z_{k+1})    \geq    
\E^{\t} [\log(w_0)]  \frac 1n\sum_{k=0}^{n-1}(Z_k+1) +  \E^{\t} [\log(1- w_0)]
\frac 1n\sum_{k=0}^{n-1}Z_{k+1} .
\end{equation}
Now, as a consequence of Theorem~\ref{theo:limitlaw}, we know that in the
ballistic   case  given   by~\eqref{eq:ballistique}   the  expectation
$\EEs(\tilde Z_0)$  is finite.  From the ergodic theorem, $\PPs$-almost surely, 
\begin{equation} \label{eq:cv_sum_Zk}
\frac  1n\sum_{k=0}^{n-1}(Z_k+1)  \xrightarrow[n  \to  \infty]{}  \EEs
(\tilde Z_0) +1 \quad 
\text{and}  \quad \frac  1n\sum_{k=0}^{n-1}Z_{k+1}  \xrightarrow[n \to
\infty]{} \EEs (\tilde Z_0). 
\end{equation}
Combining this convergence  with the lower bound in~\eqref{equa:MinoEllTheta},
we obtain $\ell(\t)\in (-\infty,0]$ in this case.
\end{proof}

The next lemma specifies that
condition~\eqref{eq:ballistique}  is necessary  for  $\ell(\t)$ to  be
finite at least in a particular case.

\begin{lemm}\label{lemm:ue}
(Converse result in the uniformly elliptic case).
Assume that  $\nu_\t([\delta, 1-\delta])=1$  for some $\delta  >0$ and
all  $\theta  \in \Theta$  (uniformly  elliptic  walk).  Then, in  the
sub-ballistic case, that is $\Es(\rho_0) \geq1$, the limit  $\ell(\theta)$ is
infinite for all parameter values. 
\end{lemm}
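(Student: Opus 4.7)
The plan is to exploit uniform ellipticity to produce a linear upper bound on $\phi_\theta(x,y)$ in the arguments, and then combine it with the fact that under Theorem~\ref{theo:limitlaw} the invariant distribution has infinite first moment in the sub-ballistic regime.

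First I would observe that because $\nu_\theta$ is supported on $[\delta,1-\delta]$, we simultaneously have $a\le 1-\delta$ and $1-a\le 1-\delta$ for $\nu_\theta$-almost every $a$. Substituting these bounds into the definition \eqref{eq:PhiTheta} of $\phi_\theta$ immediately yields
\[
\phi_\theta(x,y)=\log\int_\delta^{1-\delta} a^{x+1}(1-a)^y\,\dd\nu_\theta(a)
\le (x+y+1)\log(1-\delta)=-c\,(x+y+1),
\]
with $c:=-\log(1-\delta)>0$. No care about the maximum of $a^{x+1}(1-a)^y$ in $a$ is needed; the crude pointwise bound already suffices.

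Next I would integrate this inequality with respect to the stationary law of the pair $(\tilde Z_0,\tilde Z_1)$ under $\PPs$. Since $\{\tilde Z_n\}_{n\in\N}$ is stationary with marginal $\pi^\star=\pi_{\ts}$, both $\tilde Z_0$ and $\tilde Z_1$ have law $\pi^\star$, and the definition~\eqref{eq:ell} of $\ell(\theta)$ gives
\[
\ell(\theta)=\EEs\bigl[\phi_\theta(\tilde Z_0,\tilde Z_1)\bigr]
\le -c\bigl(\EEs[\tilde Z_0]+\EEs[\tilde Z_1]+1\bigr)
=-c\bigl(2\,\EEs[\tilde Z_0]+1\bigr).
\]
Finally, by item (ii) of Theorem~\ref{theo:limitlaw},
\[
\EEs[\tilde Z_0]=\sum_{k\in\N}k\,\pi_{\ts}(k)=\sum_{n\ge 1}(\Es\rho_0)^n,
\]
and the sub-ballistic hypothesis $\Es(\rho_0)\ge 1$ forces this series to diverge. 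Consequently $\EEs[\tilde Z_0]=+\infty$, which combined with the previous inequality gives $\ell(\theta)=-\infty$, and this holds for every $\theta\in\Theta$ because the bound is uniform in $\theta$ (the ellipticity constant $\delta$ does not depend on the choice of $\theta$ at which we evaluate $\phi_\theta$, and the divergence of $\EEs[\tilde Z_0]$ is a property of the true parameter $\ts$ only).

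There is no real obstacle here: the proof is essentially two inequalities. The one point worth being slightly careful about is that the quantity being shown to be infinite, $\EEs[\tilde Z_0]$, is computed under the \emph{true} parameter $\ts$ (which is why the sub-ballistic assumption is stated as $\Es(\rho_0)\ge 1$ rather than $\Et(\rho_0)\ge 1$), while the function $\phi_\theta$ one is integrating depends on the generic $\theta\in\Theta$; uniform ellipticity is exactly what disconnects the two so that the linear upper bound holds in $\theta$ uniformly and the divergence at $\ts$ alone is enough to conclude.
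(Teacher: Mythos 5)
Your proof is correct and follows essentially the same route as the paper: the uniform ellipticity bound $\phi_\theta(x,y)\le (x+y+1)\log(1-\delta)$ combined with the divergence of $\sum_k k\,\pi_{\ts}(k)=\sum_{n\ge1}(\Es\rho_0)^n$ in the sub-ballistic regime. The only (harmless, arguably cleaner) difference is that you integrate the bound directly against the stationary law via the definition~\eqref{eq:ell} of $\ell(\theta)$, whereas the paper passes through the Ces\`aro averages $\frac1n\sum_k\phi_\theta(Z_k,Z_{k+1})$ and the ergodic limit~\eqref{eq:cv_sum_Zk}.
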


\begin{proof}
For  any integers $x$  and $y$ and  any $a$ in the  support of
$\nu_\t$, we have 
\[
0<  \delta^{x+1}\leq  a^{x+1}\leq  (1-\delta)^{x+1}  ,  \quad\quad  0<
\delta^y\leq (1-a)^y\leq (1-\delta)^y,
\]
and then
\begin{equation*}
(x+y+1)\log(\delta)                                                 \leq
\log\int_0^1a^{x+1}(1-a)^{y}\dd \nu_{\theta}(a)   \leq   (x+y+1)\log
(1-\delta). 
\end{equation*}
This implies that for any $k\in \N$,
\begin{equation*}
(Z_k+Z_{k+1}+1)\log          (\delta)\leq\phi_{\theta}(Z_k,Z_{k+1})\leq
(Z_k+Z_{k+1}+1)\log (1-\delta),
\end{equation*}
and in particular
\begin{equation}\label{equa:MajoEllTheta}
\frac 1n\sum_{k=0}^{n-1}\phi_{\theta}(Z_k,Z_{k+1}) 
\leq \log (1-\delta)\frac 1n\sum_{k=0}^{n-1}(Z_k+Z_{k+1}+1).
\end{equation}
Combining  the convergence~\eqref{eq:cv_sum_Zk}  with the  above upper
bound  implies   that  as  soon  as   $\ell(\theta)>-\infty$,  we  get
$\EEs(\tilde   Z_0)<+\infty$  and   according  to   point   $iii)$  in
Theorem~\ref{theo:limitlaw},   this  corresponds   to   the  ballistic
case~\eqref{eq:ballistique}.  
\end{proof}


\subsection{Identification of the true parameter value} \label{sect:Ident}
Fix     $\eps    >0$.     We    want     to    prove     that    under
Assumptions~\ref{assu:trans} to~\ref{assu:BoundedBelow},
\[
  \sup_{\theta : \|\theta -\theta^\star\| \geq \eps} \ell(\t) < \ell(\theta^\star).
\] 
First of all,  note that according to Proposition~\ref{prop:ell_fini},
Assumption~\ref{assu:ballistic} ensures  that $\ell(\t)$ is  finite for
any value $\t \in \Theta$. 

Now, we start by proving that for any $ \theta\in\Theta,$ we have $\
\ell(\theta)\leq \ell(\ts).$ According to~\eqref{eq:ell}, we may write 
\begin{equation*}
  \ell(\theta)-\ell(\ts)      =     \EEs[\phi_\t(\tilde     Z_0,\tilde
  Z_1)-\phi_{\ts}(\tilde     Z_0,\tilde Z_1) ],
\end{equation*}
which may be rewritten as
\[
  \sum_{x\in\N}\pi^\star(x)\left      [\sum_{y\in\N}\log\left
    (\frac{Q_\t(x,y)}{ Q_{\ts}(x,y)}\right )Q_{\ts}(x,y)\right ].
\]
Using  Jensen's inequality  with  respect  to the logarithm
function and the (conditional) distribution $Q_{\ts}(x,\cdot)$ yields 
\begin{equation}\label{equa:Jensen}
\ell(\theta)-\ell(\ts) \leq 
\sum_{x\in\N}\pi^\star(x) 
        \log                                                       \Big
        (\sum_{y\in\N}\frac{Q_\t(x,y)}{Q_{\ts}(x,y)       }Q_{\ts}(x,y)
        \Big) =0.
\end{equation}
The equality in~\eqref{equa:Jensen}  occurs if and only if  for any $x
\in \N$, we have $Q_\t(x,\cdot)=  Q_{\ts}(x,\cdot), $
which  is   equivalent  to  the  probability   measures  $\nu_\t$  and
$\nu_{\ts}$
having identical  moments.  Since their  supports are included  in the
bounded set 
$(0,1)$, these  probability measures are then  identical \cite[see for
instance][Chapter  II,  Paragraph 12,  Theorem  7]{Shir}.  Hence,  the
equality $\ell(\t) =  \ell(\ts)$ yields $\nu_{\theta}=\nu_{\ts}$ which
is equivalent to $\t=\ts$ from Assumption~\ref{assu:ident}.  

In   other   words,  we   proved   that
$\ell(\t)\le  \ell(\ts)$ with  equality if  and only  if  $\t=\ts$. To
conclude  the proof  of Proposition~\ref{prop:IdentificationEllTheta},
it suffices to establish that the function $\t 
\mapsto \ell(\t)$ is continuous. 

From Inequality~\eqref{equa:MinoPhiTheta} and
Assumption~\ref{assu:BoundedBelow}, 
we know  that there exists a  positive constant $A$ such  that for any
$\theta \in \Theta$,
\[
   \left| \phi_\t(\tilde Z_0,\tilde  Z_1) \right| \leq A (  1 + \tilde
   Z_0+ \tilde Z_1).
\]
Under  Assumption~\ref{assu:ballistic},   we  know  that  $\EEs(\tilde
Z_0)= \EEs(\tilde Z_1)$ is finite, and under Assumption~\ref{assu:cont}, the
function  $\t  \mapsto  \phi_\t(x,y)$   is  continuous  for  any  pair
$(x,y)$.  We  deduce  that  the  function  $\t  \mapsto  \ell(\t)$  is
continuous.




\section{Numerical performances}
\label{sect:simus}

In this section, we explore the numerical performances of our 
estimation procedure and compare them with the performances  of the
estimator  proposed  by      \cite{AdEn}. As this latter procedure is rather involved and far more general than ours, we start by describing its form in our specific context in Section~\ref{sect:EstiAdEn}. The simulation protocol as well as corresponding results are given in Section~\ref{sect:experiments}, where we focus on Examples~\ref{ex:deuxpoints} to~\ref{ex:beta}.

\subsection{Estimation procedure of \cite{AdEn}} \label{sect:EstiAdEn}

The  estimator proposed by~\cite{AdEn} is a moment estimator. It  is  based  on  collecting information on  sites displaying some specified  histories. We shortly
explain it in our context: the one dimensional RWRE. 

Let $H(t,x)$ denote the history of site $x$ at time $t$ defined as
\[
  H(t,x) = (L(x,X_{[0,t]}),R(x,X_{[0,t]})),
\]
where $L(x,X_{[0,t]})$  and $R(x,X_{[0,t]})$ are  respectively defined
by~\eqref{equa:LeftStep} 
and~\eqref{equa:RightStep}, and represent the number of left and right
steps performed 
by the walk  at site $x$ until time $t$.  Note that $H(0,x)=(0,0)$ for
any site $x$. 

We define $H(t)$ as the history of the currently occupied site $X_t$ at time $t$,
that is 
\[
  H(t)=H(t,X_t).
\]
For any  $\h=(h_-,h_+) \in \N^2$, let $\{K_i^\h\}_{i\geq 0}$  be the successive
times where the history of the currently occupied site is $\h$:
\[
  K_0^\h  = \inf\{t  \geq 0  \, :  \, H(t)=\h  \}, \quad  K_{i+1}^\h =
  \inf\{t > K_{i}^\h \, : \, H(t)=\h \}.
\]
Define $\Delta_i^\h$ with values in $\{-1,1\}$ as
\[
  \Delta_i^\h = X_{K_{i}^\h+1} - X_{K_{i}^\h},
\]
which represents the move of the walk at time $K_{i}^\h$, that is, the move at the $i$th time where the history of the currently occupied site is $\h$.

According to Proposition 4 and Corollary 2 in~\cite{AdEn}, the random variables $\Delta_i^\h$ are i.i.d. and we have
\begin{align} 
 & \lim_{m \to  \infty} \frac1{m} \sum_{i=1}^m  \1_{\{ \Delta_i^\h=1
    \}} = V_1(\h) \quad \PPs\mbox{-a.s.}, \label{equa:AdEnRight} \\
\mbox{and} \quad 
&  \lim_{m \to  \infty} \frac1{m} \sum_{i=1}^m  \1_{\{ \Delta_i^\h=-1
    \}} = V_{-1}(\h) \quad \PPs\mbox{-a.s.}, \label{equa:AdEnLeft}
\end{align}
where
\[
  V_1(\h)   =       \frac{\Es[\omega_0^{1+h_+             
      }(1-\omega_0)^{h_-}]}{\Es[\omega_0^{h_+}(1-\omega_0)^{h_-}]}      \quad
    \mbox{and} \quad 
  V_{-1}(\h)    =      \frac{\Es[\omega_0^{h_+             
      }(1-\omega_0)^{1+h_-}]}{\Es[\omega_0^{h_+}(1-\omega_0)^{h_-}]}.
\]
The  quantities
  $V_1(\h)$ and $V_{-1}(\h)$ are the annealed right and left transition probabilities
  from the currently occupied site with history  $\h$.  In particular, in our case $V_1(\h)+V_{-1}(\h)=1$. The consequence of the previous convergence result is that by letting the histories $\h$ vary, we can potentially recover all the moments of the distribution $\nu$ and thus this distribution itself. The strategy underlying \citeauthor{AdEn}'s approach is then to estimate some well-chosen moments $V_1(\h)$ or $V_{-1}(\h)$ so as to obtain a set of equations which has to be inverted to recover parameter estimates.  

We thus define $M^\h_n$ and for $\eps=\pm 1$ the estimators $ \widehat V_\eps^n(\h)$ as
\[
  M^\h_n = \sup\{ K_i^\h < T_n \, : \, i \geq 1 \}, \quad 
  \widehat V_\eps^n(\h) = \frac1{M_n^\h} \sum_{i=1}^{M_n^\h} \1_{\{\Delta_i^\h=\eps\}}, \quad \eps=\pm 1.
\]
The quantity $\widehat V_\eps^n(\h)$ is either the proportion of sites from which the first move is to the right ($\eps=1$) or to the left ($\eps=-1$), among those with history $\h$. (In particular, $\widehat V_1^n(\h)+\widehat V_{-1}^n(\h)=1$.) Then, from~\eqref{equa:AdEnRight} and~\eqref{equa:AdEnLeft} and the fact that $T_n$ goes to infinity $\PPs$-almost surely when $n$ grows to infinity, we get
\[
  \lim_{n \to \infty} \widehat V_\eps^n(\h) = V_\eps(\h) \quad \quad \PPs\mbox{-almost surely}.
\]
Hence, we can estimate $\ts$ by the solution of an appropriate system of equations, as illustrated below. 

\paragraph*{Example~\ref{ex:deuxpoints} (continued).} In this case the parameter $\theta$ equals $p$ and we have
\[
  V_1(0,0)   =     \Es[\omega_0] = p^\star a_1 + (1-p^\star)a_2.
\]
Hence, among the visited sites (namely sites with history $\h=(0,0)$), the proportion of those from which the first move is to the right gives an estimator for $p^\star a_1 + (1-p^\star)a_2$. Using this observation, we can estimate $p^\star$.

\paragraph*{Example~\ref{ex:deuxpts_3param} (continued).}
In this  case the parameter  $\theta$ equals $(p,a_1,a_2)$ and  we may
for instance consider 
\begin{align}\label{eq:syst_eq_ex2}
  V_1(0,0)  &=   p^\star a_1^\star + (1-p^\star)a_2^\star, \notag \\
  V_1(0,1) &=  \{p^\star [a_1^\star]^2 + (1-p^\star)[a_2^\star]^2\} \cdot V_1(0,0)^{-1}, \\
  V_1(0,2) &=  \{p^\star [a_1^\star]^3 + (1-p^\star)[a_2^\star]^3\} \cdot V_1(0,1)^{-1}. \notag 
\end{align}
Hence, among  the visited sites  (sites with history  $\h=(0,0)$), the
proportion of those from which the first move is to the right gives an
estimator for  $p^\star a_1^\star +  (1-p^\star)a_2^\star$.  Among the
sites visited at least twice from which the first move is to the right
(sites with  history $\h=(0,1)$), the  proportion of those  from which
the second move  is also to the right gives  an estimator for $p^\star
[a_1^\star]^2 + (1-p^\star)[a_2^\star]^2$.  Among the sites visited at
least three  times from which  the first and  second moves are  to the
right (sites  with history $\h=(0,2)$),  the proportion of  those from
which  the third  move is  also to  the right  gives an  estimator for
$p^\star [a_1^\star]^3 +  (1-p^\star)[a_2^\star]^3$. Using these three
observations, we can theoretically estimate $p^\star$, $a_1^\star$ and
$a_2^\star$, as soon as the solution to this system of three
nonlinear equations is unique. Note that inverting the mapping defined
by~\eqref{eq:syst_eq_ex2} is not trivial. Moreover, while the moment estimators 
might  have small  errors, inverting  the mapping  might result  in an
increase of this error for the parameter estimates.

\paragraph*{Example~\ref{ex:beta} (continued).}
In this case, the parameter $\theta$ equals $(\alpha,\beta)$ and we have
\begin{equation*}
  V_{-1}(0,0) = \frac{\beta^\star}{\alpha^\star+\beta^\star} 
\quad \mbox{and} \quad
  V_{-1}(1,0) =  \frac{\beta^\star+1}{\alpha^\star+\beta^\star+1}.
\end{equation*}
Hence, among the visited sites (sites with history $\h=(0,0)$), the proportion of those from which the first    move   is   to    the   left    gives   an    estimator   for $\frac{\beta^\star}{\alpha^\star+\beta^\star}$.    Among   the   sites
visited at least  twice from which the first move is  to the left (sites with history $\h=(1,0)$), the proportion of  those from which  the second move  is also to  the left gives                 an                 estimator                 for
$\frac{\beta^\star+1}{\alpha^\star+\beta^\star+1}$.   Using  these two
observations, we can estimate $\alpha^\star$ and $\beta^\star$.

\subsection{Experiments}
\label{sect:experiments}
We now present the three simulation experiments corresponding respectively  to Examples~\ref{ex:deuxpoints} to~\ref{ex:beta}. 
Note   that    in   Example~\ref{ex:deuxpts_3param},   the    set   of
Equations~\eqref{eq:syst_eq_ex2}  may  not  be trivially  inverted  to
obtain the parameter $\theta=(p,a_1,a_2)$. In particular, we were
not able to perform (even only numerically) the mapping inversion needed to
compute \citeauthor{AdEn}'s estimator in this case.
Thus, in the experiments presented below, we choose to only consider our estimation procedure in this case. The comparison with \citeauthor{AdEn}'s procedure is given only for Examples~\ref{ex:deuxpoints} and~\ref{ex:beta}. In those cases, while \citeauthor{AdEn}'s procedure may be easily performed, we already obtain much better estimates with our approach. Since inverting the set of Equations~\eqref{eq:syst_eq_ex2} will increase the uncertainty of the moment estimators, we claim that  \citeauthor{AdEn}'s procedure would do even worse in this case.

For  each  of  the  three  simulations, we  \textit{a  priori}  fix  a
parameter  value $\ts$ as  given in  Table~\ref{tabl:theta_values} and
repeat 1 000 times the  procedure described below. We first generate a
random  environment  according  to  $\nu_\ts$  on  the  set  of  sites
$\{-10^4, \dots, 10^4\}$.  In fact,  we do  not use the environment
values  for all the  $10^4$ negative  sites, since  only few  of these
sites are visited by the walk. However the computation cost is very low comparing to the rest of the estimation procedure, and the symmetry is convenient for programming purpose.  Then, we run a random walk in this environment and stop it successively at the hitting times $T_n$ defined by \eqref{equa:HittingTime}, with $n \in \{10^3 k ; 1\le k \le 10 \}$.  For each
stop,  we  estimate $\ts$  according  to  our  procedure and \citeauthor{AdEn}'s one (except for the second simulation). 
In  all three cases,  the parameters  in Table~\ref{tabl:theta_values}
are  chosen such  that  the RWRE  is  transient and  ballistic to  the
right. Note that  the length of the random walk is  not $n$ but rather
$T_n$. This  quantity varies considerably throughout  the three setups
and   the   different   iterations.   Figure~\ref{figu:histos}   shows
(frequency) histograms of  the hitting  times $T_n$ for  some selected  values $n$
($n= $ 1 000, 5 000 and 10 000), obtained from 1 000 iterations of the procedures in each of the three different setups.

\begin{table}[h]
  \centering
  \begin{tabular}{|c|c|c|}
    \hline
Simulation & Fixed parameter & Estimated parameter \\
    \hline
Example~\ref{ex:deuxpoints} & $(a_1,a_2)=(0.4, 0.7)$ & $p^\star=0.3$ \\
    \hline
Example~\ref{ex:deuxpts_3param} & - & $(a_1^\star, a_2^\star, p^\star)=(0.4,0.7,0.3)$\\ 
    \hline
Example~\ref{ex:beta} & -  & $(\alpha^\star, \beta^\star)=(5,1)$ \\
    \hline
  \end{tabular}
  \caption{Parameter values for each experiment.}
  \label{tabl:theta_values}
\end{table}

Figure~\ref{figu:boxplots_cas1_3} shows the  boxplots of our estimator
and \citeauthor{AdEn}'s  estimator obtained  from 1 000  iterations of
the    procedures     in    the    two    Examples~\ref{ex:deuxpoints}
and~\ref{ex:beta}, while Figure~\ref{figu:boxplots_cas2} only displays
these        boxplots        for        our        estimator        in
Example~\ref{ex:deuxpts_3param}.  First, we shall notify that in order to simplify the
visualisation of the results, we removed in the boxplots corresponding
to       Example~\ref{ex:deuxpoints}       (Bottom      panel       of
Figure~\ref{figu:boxplots_cas1_3}) about  0.8\% of outliers  values from
our  estimator, that where  equal to  1.  Indeed  in those  cases, the
likelihood optimisation  procedure did not converge,  resulting in the
arbitrary value $\hat p=1$.  In the same way for Example~\ref{ex:beta}, we removed from the
figure parameter values of \citeauthor{AdEn}'s estimator that were too
large. It corresponds to about 0.7\% of values $\hat \alpha$ larger than 10 (for estimating
$\alpha^\star=5$) and about 0.2\% of values $\hat \beta$ larger than 3 (for estimating
$\beta^\star=1$). In  the following discussion,  we neglect
these rather rare numerical issues. 
We  first  observe  that  the  accuracies  of  the
procedures increase  with the  value of $n$  and thus the  walk length
$T_n$.  We also  note  that  both procedures  are  unbiased. The  main
difference  comes when  considering  the variance  of each  procedure
(related to the width of  the boxplots): our procedure exhibits a much
smaller  variance than \citeauthor{AdEn}'s  one as  well as  a smaller
number of outliers. We stress that \citeauthor{AdEn}'s estimator is expected to exhibit its best performances in Examples~\ref{ex:deuxpoints} and~\ref{ex:beta} that are considered here. Indeed, in these cases, inverting the system of equations that link the parameter to the moments distribution is particularly simple.

\begin{figure}[H]
  \centering
  \includegraphics[height=13cm,width=5cm,angle=-90]{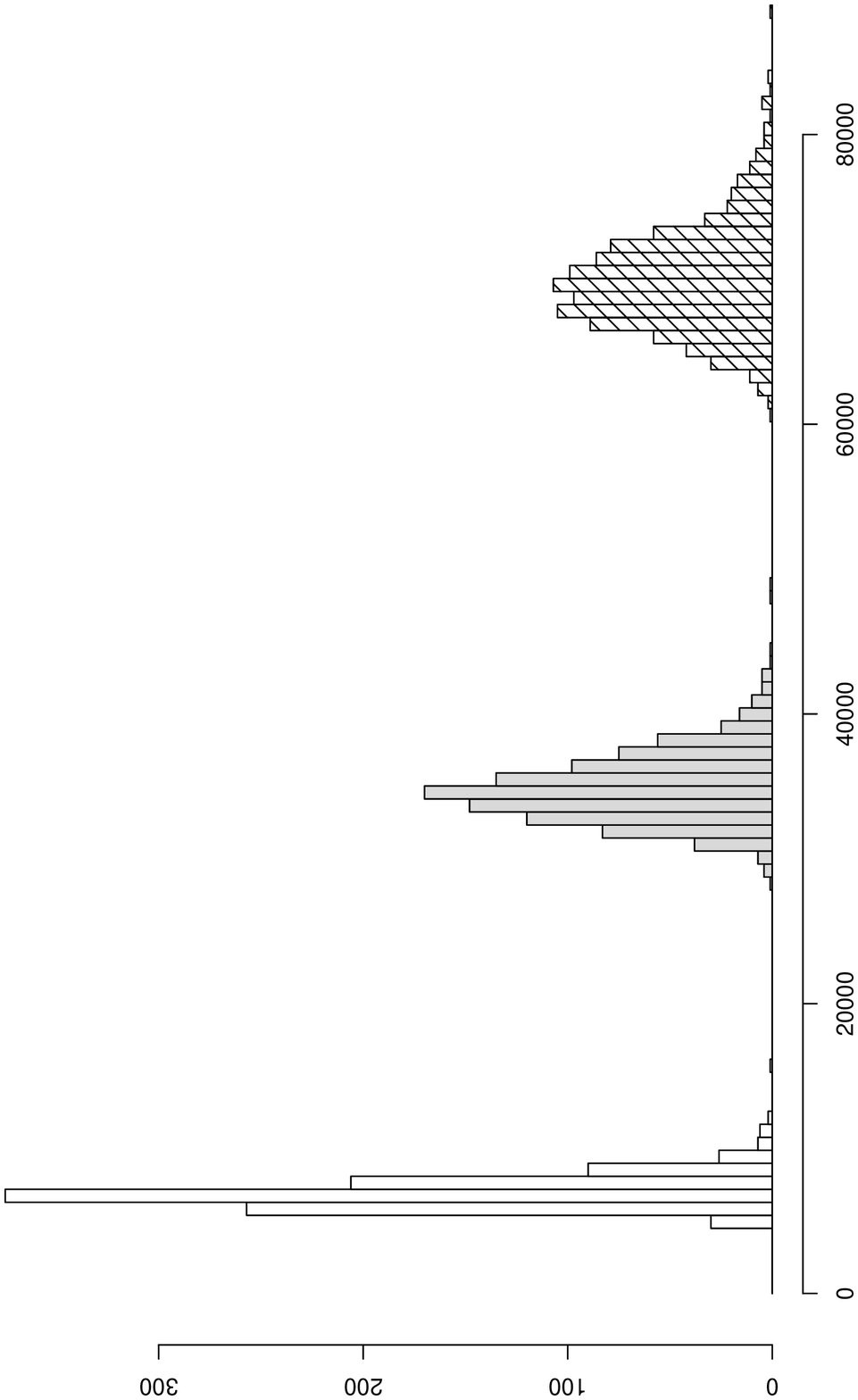} \\
    \includegraphics[height=13cm,width=5cm,angle=-90]{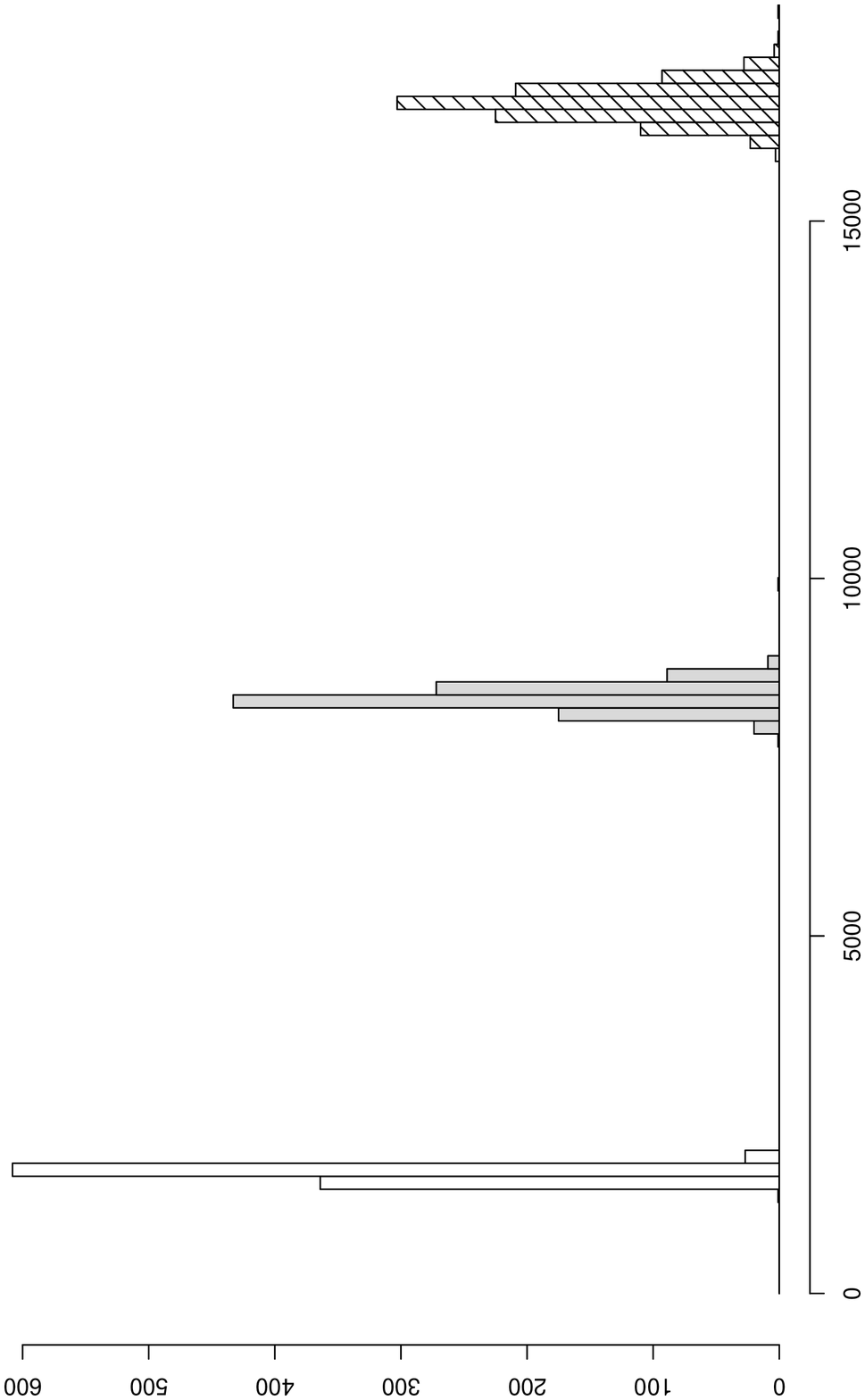}
  \caption{Histograms of  the hitting times $T_n$ obtained  from 1 000
    iterations in each of the two setups and for values $n$ equal to
    1  000 (white),  5 000  (grey) and  10 000  (hatched).  Top panel:
    Example~\ref{ex:deuxpoints}; bottom panel: Example~\ref{ex:beta}.} 
  \label{figu:histos}
\end{figure}

\begin{figure}[H]
  \centering
  \includegraphics[height=13cm,width=5cm,angle=-90]{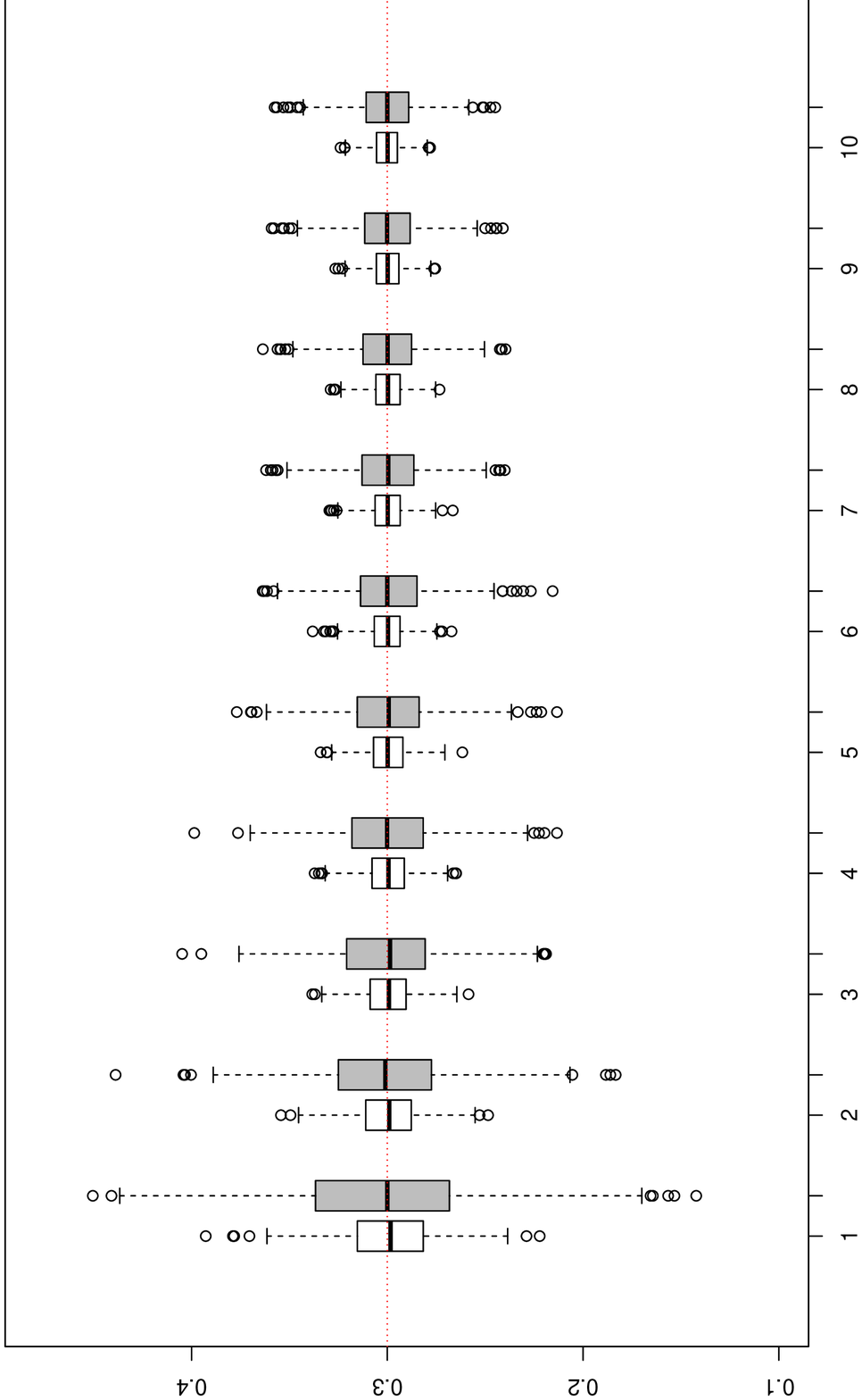} \\
  \includegraphics[height=13cm,width=5cm,angle=-90]{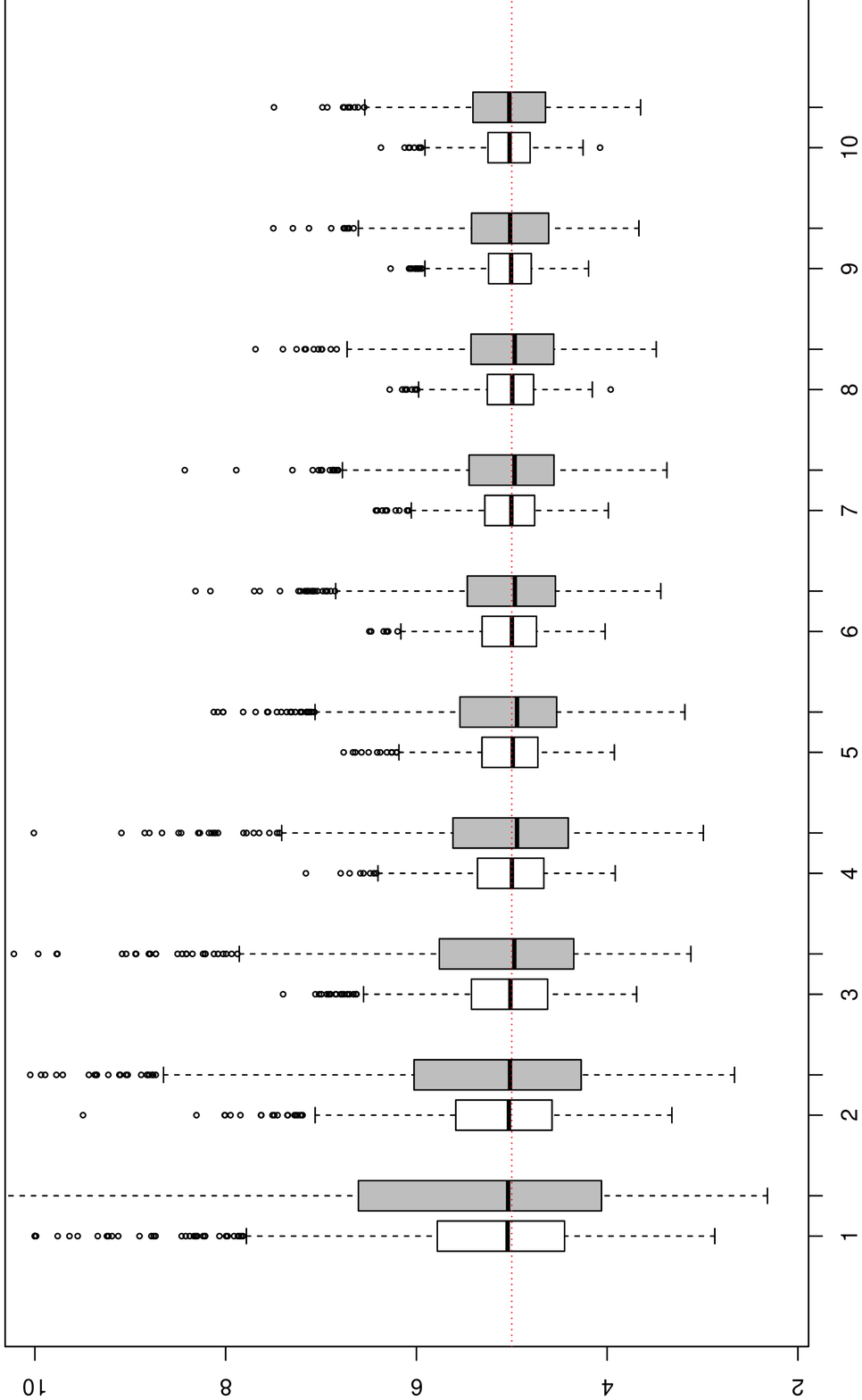}\\
  \includegraphics[height=13cm,width=5cm,angle=-90]{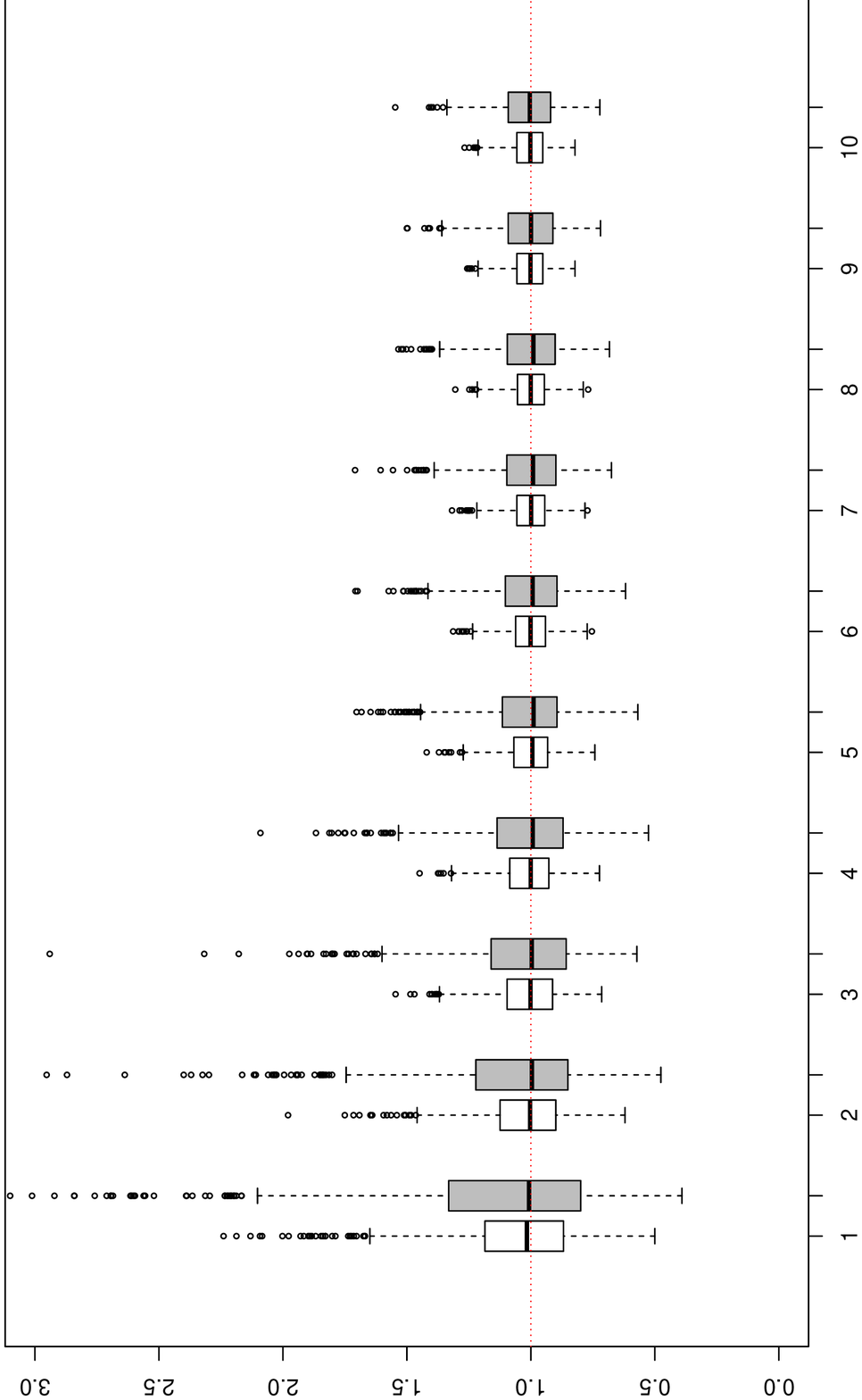}
  \caption{Boxplots   of   our   estimator   (left  and   white)   and
    \citeauthor{AdEn}'s estimator (right and grey) obtained from 1 000
    iterations and for values $n$ ranging
    in $  \{10^3 k ; 1\le k  \le 10 \}$ ($x$-axis  indicates the value
    $k$).    Top   panel   displays   estimation   of   $p^\star$   in
    Example~\ref{ex:deuxpoints}. Second and third panels display  estimation   of
    $\alpha^\star$ (second panel) and $\beta^\star$ (third panel) in Example~\ref{ex:beta}.  The true values    are indicated by horizontal lines.}
  \label{figu:boxplots_cas1_3}
\end{figure}

\begin{figure}[H]
  \centering
  \includegraphics[height=13cm,width=5cm,angle=-90]{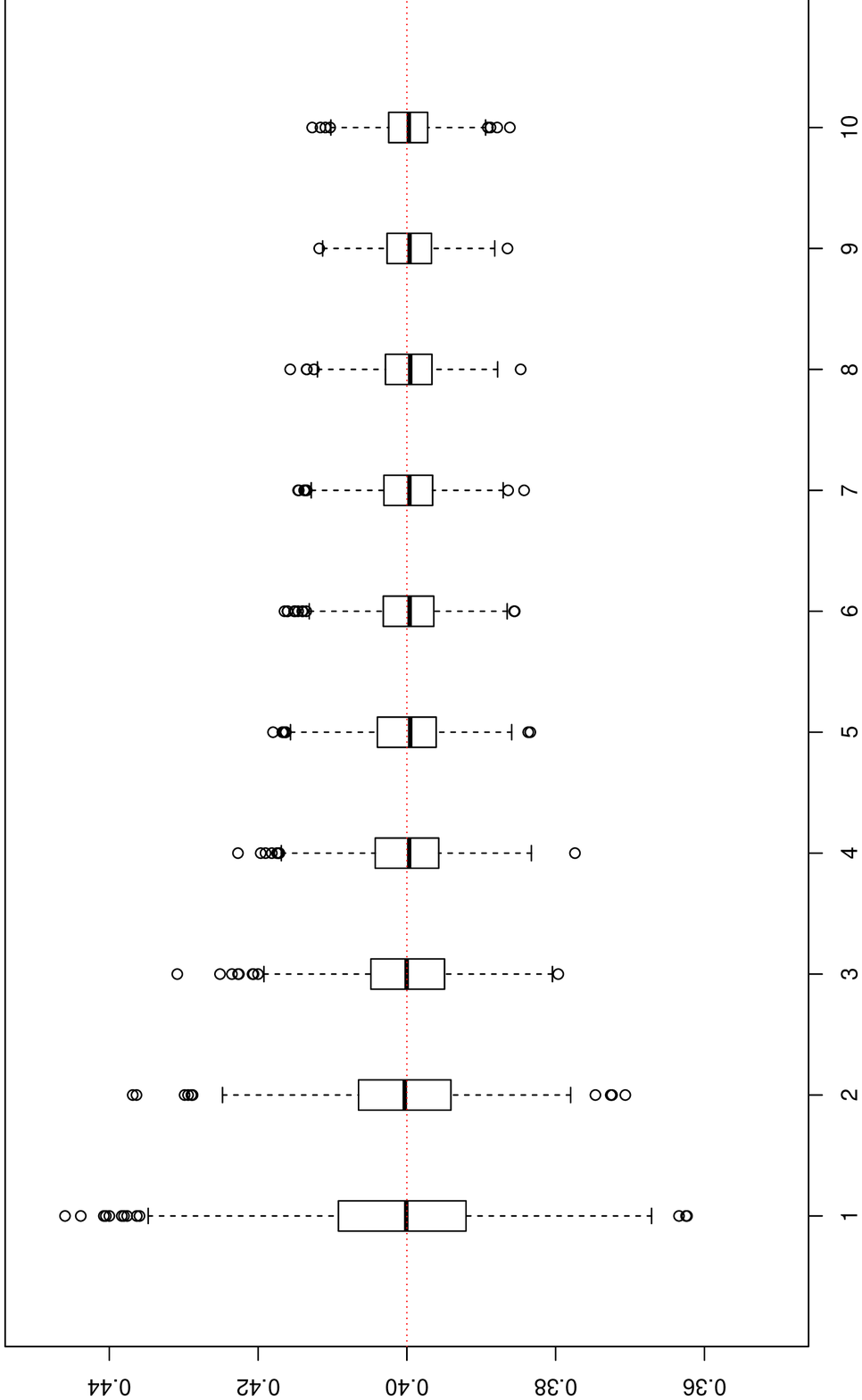} \\
  \includegraphics[height=13cm,width=5cm,angle=-90]{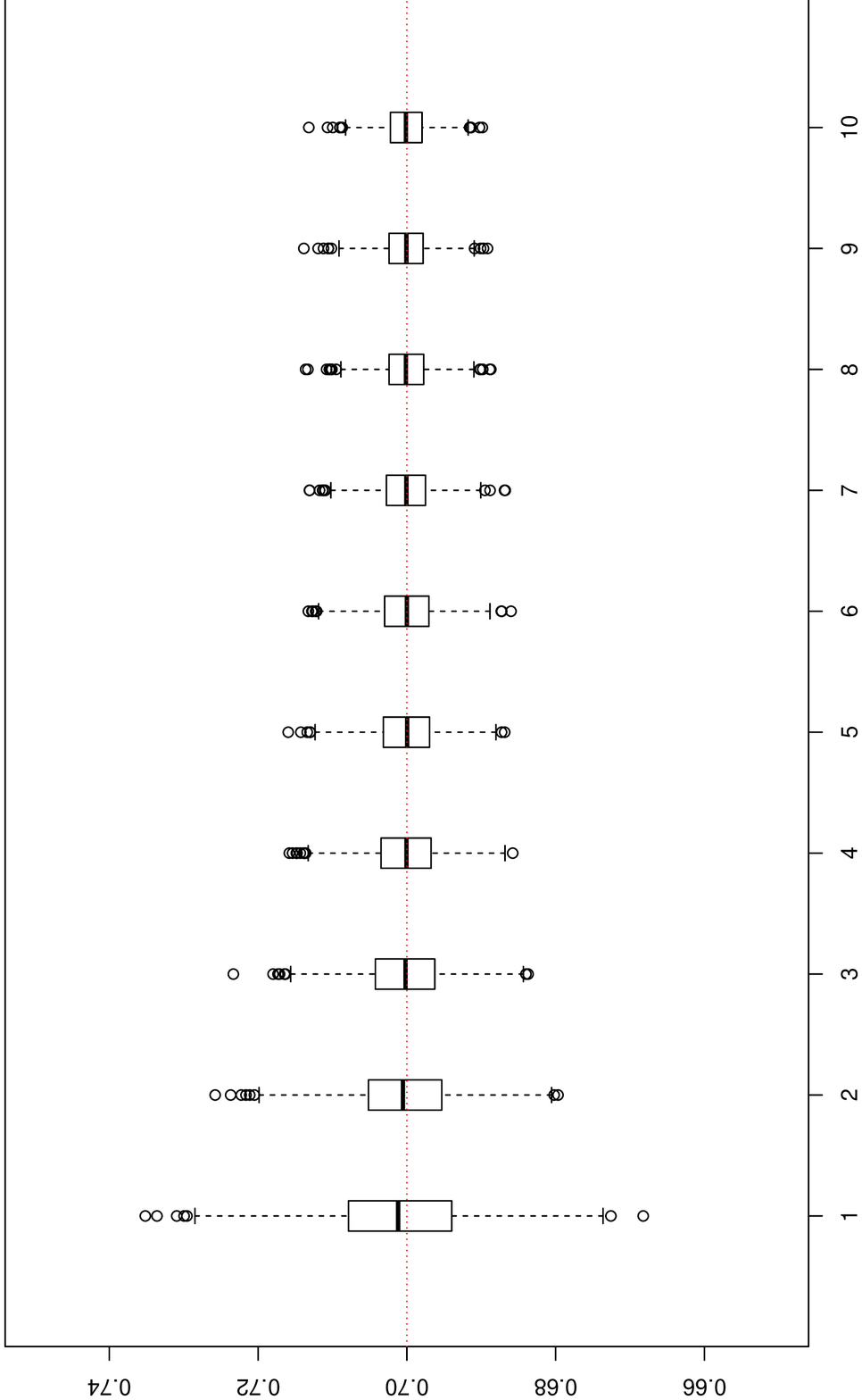} \\
  \includegraphics[height=13cm,width=5cm,angle=-90]{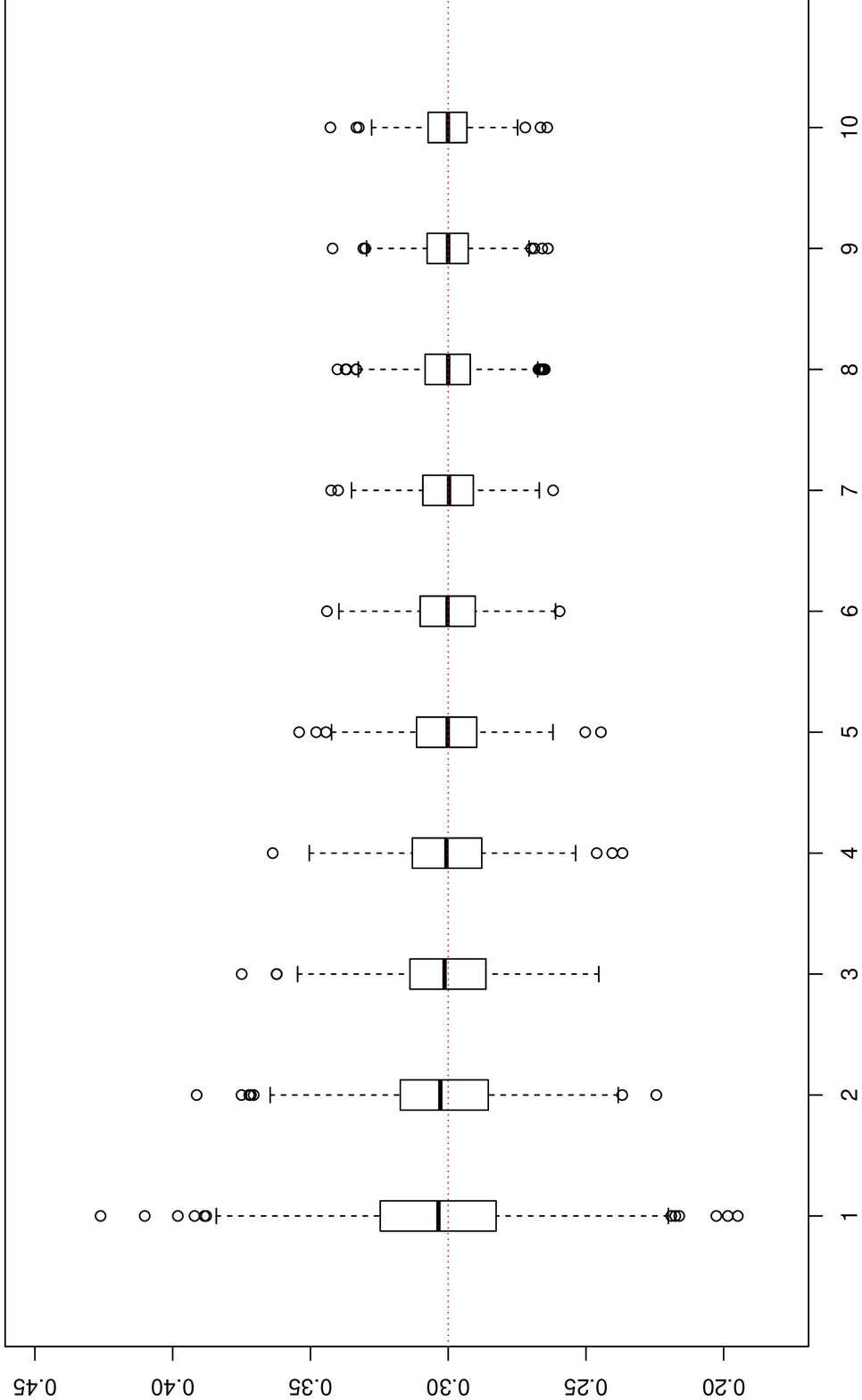} 
  \caption{Boxplots   of   our   estimator   obtained from 1 000
    iterations in   Example~\ref{ex:deuxpts_3param} and for values $n$ ranging
    in $ \{10^3 k ; 1\le k \le 10 \}$ ($x$-axis indicates the value $k$).
Estimation of $a_1^\star$ (top panel), $a_2^\star$ (middle panel) and
   $p^\star$ (bottom panel). The true values
    are indicated by  horizontal lines.}
  \label{figu:boxplots_cas2}
\end{figure}

\bibliographystyle{chicago}
\bibliography{MAMA}

\end{document}